
\documentclass[11pt,a4paper]{amsart}

\usepackage{url}
\usepackage[colorlinks,linkcolor=blue,citecolor=blue]{hyperref}

\date{To appear in \textit{Mathematics of Computation} (accepted, 2011-05-02)}


\usepackage{amsmath,amsthm,amsfonts}

\newcommand{\N}{\mathbb{N}}
\newcommand{\Z}{\mathbb{Z}}
\newcommand{\Q}{\mathbb{Q}}
\newcommand{\R}{\mathbb{R}}
\newcommand{\C}{\mathbb{C}}

\newcommand{\B}{\mathcal{B}}
\newcommand{\E}{\mathcal{E}}


\renewcommand{\Re}{\operatorname{Re}}
\renewcommand{\Im}{\operatorname{Im}}

\newcommand{\dist}{\operatorname{dist}}

\newcommand{\res}{\operatorname{Res}}
\newcommand{\eqdef}{\overset{\mathrm{def}}{=}}

\theoremstyle{plain}
\newtheorem{theorem}{Theorem}

\newtheorem{corollary}[theorem]{Corollary}

\newtheorem{prop}[theorem]{Proposition}

\newtheorem{lemma}{Lemma}

\theoremstyle{remark}
\newtheorem{remark}{Remark}

\newtheorem{example}{Example}

\title[Estimates for Apostol-Bernoulli and Apostol-Euler polynomials]%
{Asymptotic estimates for Apostol-Bernoulli and Apostol-Euler polynomials} 


\author[L.~M.~Navas]{Luis M. Navas}
\address{%
        Departamento de Matem\'aticas,
        Universidad de Salamanca,
        Plaza de la Merced 1-4,
        37008 Salamanca, Spain}
\email{navas@usal.es}

\author[F.~J.~Ruiz]{Francisco J. Ruiz}
\address{%
        Departamento de Matem\'aticas,
        Universidad de Zaragoza,
        Campus de la Plaza de San Francisco,
        50009 Zaragoza, Spain}
\email{fjruiz@unizar.es}

\author[J.~L.~Varona]{Juan L. Varona}
\address{%
        Departamento de Matem\'aticas y Computaci\'on,
        Universidad de La Rioja,
        Calle Luis de Ulloa s/n,
        26004 Logro\~no, Spain}
\email{jvarona@unirioja.es}
\urladdr{http://www.unirioja.es/cu/jvarona/welcome.html}

\thanks{Research of the second and third authors supported by grant MTM2009-12740-C03-03 of the~DGI}

\keywords{Apostol-Bernoulli polynomials, Apostol-Euler polynomials, 
Fourier series, asymptotic estimates}

\subjclass[2010]{Primary 11B68; Secondary 42A10, 41A60}


\begin{document}

\maketitle

\begin{abstract}
	We analyze the asymptotic behavior of the Apostol-Bernoulli polynomials
	$\B_{n}(x;\lambda)$ in detail. The starting point is their Fourier series on $[0,1]$
	which, it is shown, remains valid as an asymptotic expansion 
	over compact subsets of the complex plane.
	This is used to determine explicit estimates on the constants in the approximation, and 
	also to analyze oscillatory phenomena which arise in certain cases.

	These results are transferred to the Apostol-Euler polynomials $\E_{n}(x;\lambda)$
	via a simple relation linking them to the Apostol-Bernoulli polynomials.
\end{abstract}

\section{Introduction}\label{sec:intro}

The family of Apostol-Bernoulli polynomials $\B_{n}(x;\lambda)$ in the variable $x$ and parameter $\lambda \in \C$ were introduced by Apostol in~\cite{Ap}, where they are defined by means of the power series expansion at $0$ of the meromorphic generating function
\begin{equation}
\label{eq:GF-AB}
  g(x,\lambda,z) \stackrel{\text{\tiny def}}{=} \frac{z e^{xz}}{\lambda e^z-1} = \sum_{n=0}^\infty \B_n(x;\lambda)\, \frac{z^n}{n!}.
\end{equation}
The set $S$ of poles of $g$ is $\{ 2\pi i n - \log\lambda : n \in \Z\}$ when $\lambda \neq 1$ and $\{ 2 \pi i n : n \in \Z, n \neq 0 \}$ when $\lambda = 1$, due to the fact that $0$ is a removable singularity in the latter case. Here and throughout this paper, $\log$ is the principal branch of the logarithm; namely, for $\lambda \neq 0$,\ $\log\lambda = \log |\lambda| + i \arg \lambda$, with $-\pi < \arg \lambda \le \pi$; in particular $\log 1 = 0$. 
This change in the set of poles is reflected in various discontinuities as $\lambda \to 1$; for example, the radius of convergence of the series in \eqref{eq:GF-AB} is $2\pi$ when $\lambda = 1$ and $|\log \lambda|$ when $\lambda \neq 1$.

The value $\lambda = 1$ corresponds to the classical Bernoulli polynomials, i.e.\ $\B_n(x;1) = B_n(x)$, but it is certainly not the case that $B_{n}(x) = \lim_{\lambda \to 1}\B_{n}(x;\lambda)$.
There \emph{is} a limiting relationship between $\B_{n}(x;\lambda)$ and $B_{n}(x)$ as $\lambda \to 1$, but it is not immediately obvious. Another aspect of this discontinuity is that, although $B_{n}(x)$ is monic of degree $n$, for $\lambda \neq 1$ the degree of $\B_{n}(x;\lambda)$ is $n-1$ and its leading term is $n/(\lambda-1)$.

We follow the convention of denoting $\B_{n}(\lambda) = \B_{n}(0;\lambda)$; these are the Apostol-Bernoulli \emph{numbers}. In fact, $\B_{n}(\lambda)$ is a rational function in $\lambda$ with denominator $(\lambda-1)^{n+1}$ and
coefficients related to the Stirling numbers. The classical Bernoulli numbers are given by $B_{n} = \B_{n}(1)$. The above remarks concerning the discontinuity at $\lambda = 1$ also apply here.

The case $\lambda = 0$ is trivial; indeed $\B_0(x;0) = 0$ and $\B_n(x;0) = -nx^{n-1}$ for $n \ge 1$. In particular 
$\B_{1}(0) = -1$ and $\B_{n}(0) = 0$ for $n \neq 1$. For this reason we will assume $\lambda \neq 0$ in what follows.

Dilcher showed in~\cite{Dil}, using properties of the Riemann zeta function, that the Bernoulli polynomials satisfy
\begin{equation}
\label{eq:lim-B}
\begin{aligned}
  \lim_{n \to \infty} \frac{(-1)^{n-1} (2\pi)^{2n}}{2 (2n)!} \,B_{2n}(z)
  &= \cos(2\pi z),
\\
  \lim_{n \to \infty} \frac{(-1)^{n-1} (2\pi)^{2n+1}}{2 (2n+1)!} \,B_{2n+1}(z)
  &= \sin(2\pi z),
\end{aligned}
\end{equation}
uniformly on compact subsets of~$\C$. In addition, the difference between the $n$th term and its limit is found to be of the order $O(2^{-n})$, with the implicit constant depending exponentially on~$|z|$.
The authors showed in~\cite{NRV-JAT} how these facts also follow easily, at least on $[0,1]$, from the Fourier expansion of the Bernoulli polynomials, including the quantitative bounds for the differences, and also precise estimates on the rate of convergence, namely bounds for the successive quotients of these differences.

The purpose of this article is to obtain analogous asymptotic estimates for 
$\B_n(z;\lambda)$, valid for any $z \in \C$. In short, the central result of this paper is that the Fourier series
\eqref{eq:FS-AB} of $\B_{n}(x;\lambda)$ for $x \in [0,1]$, which a priori represents it only on this interval, is actually valid on the entire complex plane as an \emph{asymptotic} series representing $\B_{n}(z;\lambda)$ for $z \in \C$. From this we deduce explicit asymptotic estimates for the Apostol-Bernoulli polynomials which include the pattern mentioned above, namely, geometric order of decrease for the differences between $\B_{n}(z;\lambda)$ and its asymptotic approximations, with implicit constants that are exponential in $|z|$, as well as estimates for succesive quotients of these differences. The behavior of the approximations varies considerably depending on $\lambda$, with $\lambda = 1$, studied in~\cite{NRV-JAT}, turning out to be the exception rather than the rule.

Many authors, beginning with Apostol in the foundational paper \cite{Ap}, use transcendental methods when studying the Apostol-Bernoulli polynomials, due to their relation with the Lerch transcendent $\Phi$, defined by analytic continuation of the series
$$
	\Phi(\lambda,s,a) = \sum_{k=0}^{\infty} \frac{\lambda^{k}}{(k + a)^{s}},
$$
where $a \neq 0,-1,-2,\ldots$ and either $|\lambda| < 1, s \in \C$ or $|\lambda|=1, \Re s > 1$
guarantees convergence (we use $\lambda$ as a variable in order to maintain a unified notation).
One has 
\begin{equation}
\label{eq:AB-Phi}
	\B_{n}(a;\lambda) = -n \, \Phi(\lambda,1-n,a)
\end{equation}
on various domains of analytic continuation, and this relation is often exploited to obtain identities satisfied by $\B_{n}$ as special cases of identities satisfied by~$\Phi$. The Fourier series of $\B_{n}(x;\lambda)$ is an instance of this (see Section~\ref{sec:FourierSeriesAB}).

While certainly not denying the great merit of this approach, it can be overkill, adding unnecessary effort to the study of what is, after all, a polynomial family. 
In this paper we wish to bring to light that the \emph{algebraic} properties of the Apostol-Bernoulli polynomials and basic Fourier analysis can get us equally far in describing their asymptotic behavior.
We feel this is interesting in its own right, more so as the algebraic properties involved are precisely those of the so-called ``umbral'' variety and hence the method probably works in a wider context. For instance, this point of view yields an overlooked elementary proof of the Fourier expansion \eqref{eq:FS-AB} itself.

The same methods used to study the Apostol-Bernoulli polynomials may also be applied to the Apostol-Euler polynomials $\E_{n}(x;\lambda)$ introduced by Luo and Srivastava (see~\cite{Luo-Tai,LuoSri}). In Section~\ref{sec:ApostolEuler}, using a relation which apparently has not been previously remarked on in the literature (see Lemma~\ref{lem:relAB-AE2}), we will show that the analogous results for $\E_{n}$ are consequences of those for $\B_{n}$; hence we concentrate on the latter and summarize the results for the former.

\section{The Fourier series of $\B_{n}(x;\lambda)$}
\label{sec:FourierSeriesAB}

In \cite[Theorem 2.1,\,p.~3]{Luo-MC}, it is proved that the Fourier series of $\B_n(x;\lambda)$ for any $\lambda \in \C$ with $\lambda \neq 0$ is
\begin{equation}
\label{eq:FS-AB}
  \B_n(x;\lambda) = -\delta_n(x;\lambda) - \frac{n!}{\lambda^x} \sum_{k \in \Z \setminus\{0\}} 
  \frac{e^{2\pi i k x}}{(2\pi i k - \log \lambda)^n},
\end{equation}
where $\delta_n(x;\lambda) = 0$ or $\frac{(-1)^n n!}{\lambda^x \log^n \lambda}$ according as $\lambda = 1$ or $\lambda \ne 1$. This expansion is valid for $0 \le x \le 1$ when $n \ge 2$ and for $0 < x < 1$ when $n = 1$. It yields the Fourier series of the Bernoulli polynomials, due to Hurwitz (1890), as the special case $\lambda = 1$.

The expansion \eqref{eq:FS-AB} is proved from scratch in~\cite{Luo-MC} starting from the generating function \eqref{eq:GF-AB} and applying the Lipschitz summation formula (which can be derived in turn from the Poisson summation formula). It may also be obtained immediately via the relation~\eqref{eq:AB-Phi} to the Lerch transcendent by specializing the following series expansion for $\Phi$, that can be found in~\cite[formula 1.11\,(6), p.~28]{Bate}:
\begin{equation}
\label{eq:AS-Lerch}
	\Phi(\lambda,s,x) = \lambda^{-x} \Gamma(1-s) \sum_{k \in \Z} (2\pi i k - \log \lambda)^{s-1} e^{2 \pi i k x},
\end{equation}
for $0 < x \leq 1$, $\Re s < 0$, $\lambda \notin (-\infty,0]$.

As we have mentioned in the introduction, our approach does not require the use of the Lerch transcendent.
In this vein, we present an elementary proof of \eqref{eq:FS-AB}, using only the algebraic properties of the Apostol-Bernoulli polynomials and the real Riemann integral. The main ingredient is the fact that $\{\B_{n}(x;\lambda)\}$ is
an \emph{Appell sequence} for fixed~$\lambda$; namely, it satisfies the derivative relation
\begin{equation}
\label{eq:appell}
	\B'_{n}(x;\lambda)=n\B_{n-1}(x;\lambda),
\end{equation}
as can be easily deduced from the form of the generating function \eqref{eq:GF-AB}.
One merely needs to observe that \eqref{eq:FS-AB} is equivalent to finding the Fourier coefficients of $x \mapsto \lambda^{x}\B_n(x;\lambda)$ for fixed~$\lambda$. 

\begin{prop}
\label{prop:FC-AB}
For any $\lambda \in \C$, $\lambda \neq 0,1$, $k \in \Z$ and $n \in \N$, we have
\begin{equation}
\label{eq:FC-AB}
\int_0^1 \lambda^x\B_{n}(x;\lambda)e^{-2\pi ikx}dx=
-\frac{n!}{(2\pi i k - \log \lambda)^{n}}.
\end{equation}
\end{prop}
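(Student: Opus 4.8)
The plan is to use only the two structural facts already available for the sequence $\{\B_n(x;\lambda)\}_n$: the Appell relation \eqref{eq:appell} and the functional equation encoded in the generating function \eqref{eq:GF-AB}. Write $f_n(x) = \lambda^x\B_n(x;\lambda)$ and let $I_n(k)$ denote the integral on the left of \eqref{eq:FC-AB}. Differentiating $f_n$ and invoking \eqref{eq:appell} gives the first-order relation
\[
f_n'(x) = (\log\lambda)\, f_n(x) + n\, f_{n-1}(x),
\]
which links consecutive $f_n$ and is the engine of the whole computation. Since $f_n$ is a polynomial multiplied by $\lambda^x = e^{x\log\lambda}$, everything below stays within the realm of the ordinary Riemann integral, as desired.

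First I would compute $\int_0^1 f_n'(x)\,e^{-2\pi ikx}\,dx$ in two ways. Substituting the displayed expression for $f_n'$ yields $(\log\lambda)\,I_n(k) + n\,I_{n-1}(k)$, while integrating by parts (with $v=f_n$) yields the boundary term $[\,f_n(x)e^{-2\pi ikx}\,]_0^1$ plus $2\pi ik\,I_n(k)$. Because $k\in\Z$, the exponential is $1$ at both endpoints, so the boundary term collapses to $f_n(1)-f_n(0) = \lambda\B_n(1;\lambda)-\B_n(0;\lambda)$. Equating the two evaluations produces the recursion
\[
(2\pi ik - \log\lambda)\,I_n(k) = n\,I_{n-1}(k) - \bigl(\lambda\B_n(1;\lambda)-\B_n(0;\lambda)\bigr).
\]

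The one genuinely non-mechanical step, which I would treat as the crux, is the evaluation of the boundary quantity $\lambda\B_n(1;\lambda)-\B_n(0;\lambda)$. I would read it straight off the generating function: forming $\lambda\,g(1,\lambda,z) - g(0,\lambda,z) = \frac{\lambda z e^z - z}{\lambda e^z - 1} = z$ and comparing coefficients of $z^n/n!$ shows that this difference equals $1$ when $n=1$ and $0$ otherwise. (This is the Apostol-Bernoulli analogue of $B_n(1)-B_n(0)=0$ for $n\neq1$.) I would also remark that the hypothesis $\lambda\neq 1$ forces $\log\lambda\neq 0$, so every factor $2\pi ik-\log\lambda$ is nonzero, including at $k=0$, and no division is ever illegitimate.

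Finally I would close by induction on $n$. Since $\B_0(x;\lambda)=0$ for $\lambda\neq1$ (immediate from \eqref{eq:GF-AB}, whose numerator vanishes at $z=0$ while the denominator does not), the base datum is $I_0(k)=0$; the recursion at $n=1$ then gives $I_1(k) = -1/(2\pi ik-\log\lambda)$, which is precisely \eqref{eq:FC-AB} for $n=1$. For $n\geq 2$ the boundary term vanishes and the recursion reduces to $(2\pi ik-\log\lambda)\,I_n(k)=n\,I_{n-1}(k)$; substituting the inductive hypothesis $I_{n-1}(k) = -\frac{(n-1)!}{(2\pi ik-\log\lambda)^{n-1}}$ immediately delivers the claimed value of $I_n(k)$, completing the proof.
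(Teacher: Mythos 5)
Your proof is correct and follows essentially the same route as the paper's: induction on $n$, integration by parts driven by the Appell relation \eqref{eq:appell}, and the boundary identity $\lambda\B_{n}(1;\lambda)-\B_{n}(0;\lambda)=[n=1]$ read off from $\lambda g(1,\lambda,z)-g(0,\lambda,z)=z$. The only cosmetic difference is that you anchor the induction at $n=0$ via $\B_{0}(x;\lambda)=0$ and let the recursion produce the $n=1$ case, whereas the paper verifies $n=1$ directly from $\B_{1}(x;\lambda)=1/(\lambda-1)$.
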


\begin{proof}
Fixing $k$, we use induction on $n$. One easily checks the case $n=1$, noting that
$\B_1(x;\lambda)=1/(\lambda-1)$. Let $n \geq 2$. Assuming \eqref{eq:FC-AB} is true for $n-1$, integrate it by parts, using the derivative formula \eqref{eq:appell} to
obtain
$$
\int_0^1 \lambda^x\B_{n}(x;\lambda)e^{-2\pi ikx}dx=
\lambda\B_{n}(1;\lambda)-\B_{n}(0;\lambda)-\frac{n!}{(2\pi i k - \log \lambda)^{n}}.
$$
The proof is concluded by checking that $\lambda\B_{n}(1;\lambda) = \B_{n}(0;\lambda)$ for $n \geq 2$. 
This follows from \eqref{eq:GF-AB} noting that $\lambda g(1,\lambda,z) - g(0,\lambda,z) = z$ (see also~\cite[formula 3.5,\,p.~165]{Ap}).
\end{proof}

\begin{remark}
By uniqueness of the Fourier series, \eqref{eq:FC-AB} could be used to \emph{define} the Apostol-Bernoulli polynomials, instead of using the generating function~\eqref{eq:GF-AB}.
Indeed, it is not hard to work ``backwards'' to deduce \eqref{eq:GF-AB} from \eqref{eq:FC-AB}
and also to prove the basic algebraic and differential properties satisfied by the family directly from the Fourier series, thus providing an alternative approach to the theory. This applies also in particular to the Bernoulli and Euler  polynomials.
\end{remark}

For our purposes, it is useful to rewrite \eqref{eq:FS-AB} in the alternative form
\begin{equation}
\label{eq:FS-poles}	
	\frac{1}{n!}\B_{n}(x;\lambda) = - \sum_{a \in S} \frac{e^{a x}}{a^{n}},
\end{equation}
where $S$ is the set of poles of the generating function $g(x,\lambda,z)$, namely
\begin{equation}
\label{def:poles}
	S = \{ a_{k} : k \in Z \},
	\quad
	a_{k} = 2 \pi i k - \log \lambda,
	\quad
	Z = \begin{cases}
	    \Z                 & \text{if $\lambda \neq 1$},
	       \\
	    \Z \setminus \{0\} & \text{if $\lambda = 1$.}
	    \end{cases}
\end{equation}
Note that the derivative relation \eqref{eq:appell} is immediate from \eqref{eq:FS-poles} by differentiating term by term.

This representation also suggests a proof of \eqref{eq:FS-poles} from \eqref{eq:GF-AB} via the calculus of residues. Indeed, $a^{-n} e^{ax} = \res(z^{-(n+1)} g(x,\lambda,z),a)$ for $a \in S$. The left hand side of \eqref{eq:FS-poles} is by definition the coefficient of $z^{n}$ in the power series expansion of $g$ around $0$, so it is equal to $\res(z^{-(n+1)} g(x,\lambda,z),0)$. Hence \eqref{eq:FS-poles} is equivalent to $\sum_{a \in \C} \res(z^{-(n+1)}g(x,\lambda,z),a) = 0$,
which can be proved by showing that the integral $\oint_{C_{N}} z^{-(n+1)}g(x,\lambda,z)\,dz$ tends to $0$ over a suitable increasing sequence of circles~$C_{N}$.

This is essentially the proof in~\cite[Section~1.11]{Bate} of the Fourier expansion of the Lerch transcendent~\eqref{eq:AS-Lerch}. The recent paper~\cite{Bayad} applies this method to obtain the Fourier series of the Apostol-Bernoulli, Apostol-Euler, and also the Apostol-Genocchi polynomials.
Even though it uses no more than basic complex analysis, all things considered, after filling in all the details (for example, the case $n=1$ must be estimated separately from $n \geq 2$), the proof of Proposition~\ref{prop:FC-AB} given above is simpler. Nevertheless, the form \eqref{eq:FS-poles} of the Fourier expansion is particularly well-suited for studying asymptotic approximations to $\B_{n}(z;\lambda)$.

\section{Approximations to $\B_{n}(\lambda)$}
\label{sec:approx-ABN}

To obtain approximation results from \eqref{eq:FS-poles} we need to order the set of poles $S$ of the generating function \eqref{eq:GF-AB} by order of magnitude.

\begin{lemma}
\label{lem:pole-ordering}
Let $a_{k} = 2 \pi i k - \log \lambda$ with $k \in \Z$, $\lambda \in \C$, $\lambda \neq 0$.
\begin{enumerate}
	\item[(a)] If $\Im\lambda > 0$, then for $k \geq 1$, we have
	$$
		0 < |a_{0}| < |a_{1}| < |a_{-1}| < \cdots < |a_{k}| < |a_{-k}| < \cdots
	$$

	\item[(b)] If $\Im\lambda < 0$, then for $k \geq 1$, we have
	$$
		0 < |a_{0}| < |a_{-1}| < |a_{1}| < \cdots < |a_{-k}| < |a_{k}| < \cdots
	$$

	\item[(c)] If $\lambda > 0$, then for $k \geq 1$, we have
	$$
		|a_{0}| < |a_{1}| = |a_{-1}| < \cdots < |a_{k}| = |a_{-k}| < \cdots
	$$
	(note that $a_{0} = 0$ if and only if $\lambda = 1$, in which case it is not a pole, hence is excluded in this chain).

	\item[(d)] If $\lambda < 0$, then for $k \geq 0$, we have
	$$
		0 < |a_{0}| = |a_{1}| < |a_{-1}| = |a_{2}| < \cdots < |a_{-k}| = |a_{k+1}| < \cdots
	$$
\end{enumerate}
In addition, $|a_{k}| \geq 2\pi (|k| - \frac{1}{2})$ if $|k| \geq 1$.
\end{lemma}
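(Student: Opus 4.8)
The plan is to reduce the entire statement to a single real computation. Writing $\log\lambda = \log|\lambda| + i\arg\lambda$ and separating real and imaginary parts gives $a_{k} = -\log|\lambda| + i(2\pi k - \arg\lambda)$, whence
$$
|a_{k}|^{2} = (\log|\lambda|)^{2} + (2\pi k - \arg\lambda)^{2}.
$$
The crucial observation is that the real part of every $a_{k}$ is the \emph{same}, namely $-\log|\lambda|$, so the first summand is independent of $k$. Consequently, ordering the moduli $|a_{k}|$ is equivalent to ordering the numbers $|2\pi k - \arg\lambda|$, and all four chains of inequalities — together with the final bound — become statements about the single real parameter $\theta := \arg\lambda \in (-\pi,\pi]$.

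Next I would translate the four hypotheses into conditions on $\theta$: by the principal-branch convention, $\Im\lambda > 0$ means $0 < \theta < \pi$; $\Im\lambda < 0$ means $-\pi < \theta < 0$; $\lambda > 0$ means $\theta = 0$; and $\lambda < 0$ means $\theta = \pi$. These cases exhaust all $\lambda \neq 0$. For $k \geq 1$ one has $2\pi k - \theta > 0$, so $|2\pi k - \theta| = 2\pi k - \theta$; for $k \leq -1$ one has $2\pi k - \theta < 0$, so $|2\pi k - \theta| = 2\pi|k| + \theta$; and for $k=0$ the value is $|\theta|$. Substituting the appropriate range of $\theta$ then produces an explicit list in each case; for instance, in case (a) the values are $\theta,\ 2\pi - \theta,\ 2\pi + \theta,\ 4\pi - \theta,\ \dots$, which interleave exactly as $|a_{0}|, |a_{1}|, |a_{-1}|, |a_{2}|, \dots$.

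The verification of each chain then reduces to comparing consecutive entries, and every such comparison collapses to the single inequality $|\theta| < \pi$ (for example, $2\pi k + \theta < 2\pi(k+1) - \theta$ is exactly $2\theta < 2\pi$). In the strict cases (a) and (b) this yields strict inequalities throughout; at the boundary values $\theta = 0$ and $\theta = \pi$ the inequality $|\theta| < \pi$ degenerates into the equalities appearing in (c) and (d), which is precisely why those two cases exhibit ties. I would also record here that $a_{0} = 0$ forces both $\log|\lambda| = 0$ and $\theta = 0$, i.e.\ $\lambda = 1$, justifying the parenthetical remark in (c). Finally, the closing bound follows at once from $|a_{k}| \geq |2\pi k - \theta| \geq 2\pi|k| - |\theta| \geq 2\pi|k| - \pi = 2\pi(|k| - \frac{1}{2})$ for $|k| \geq 1$, using $|\theta| \leq \pi$ and the reverse triangle inequality.

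The computation is elementary throughout, so there is no genuine mathematical obstacle. The only thing demanding care is the bookkeeping: correctly matching each sign condition on $\Im\lambda$ (or on the real axis) to the interval for $\theta$, and then confirming that the boundary values $\theta \in \{0, \pi\}$ reproduce precisely the equalities in (c) and (d) rather than strict inequalities.
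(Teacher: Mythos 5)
Your argument is correct and is essentially the paper's own proof: the paper sets $\xi = \frac{\log\lambda}{2\pi i}$ so that $|a_{k}| = 2\pi|k-\xi|$ and orders the moduli via $|k-\xi|^{2} = (k-\Re\xi)^{2} + (\Im\xi)^{2}$, which is precisely your decomposition $|a_{k}|^{2} = (\log|\lambda|)^{2} + (2\pi k - \arg\lambda)^{2}$ since $\Re\xi = \arg\lambda/(2\pi)$ and $\Im\xi = -\log|\lambda|/(2\pi)$. The one loose phrase is that the comparisons $|a_{k}|$ versus $|a_{-k}|$ reduce to the sign of $\theta$ rather than to $|\theta|<\pi$ (it is that inequality which degenerates to the equalities of case (c)), but your explicit case analysis already handles this correctly.
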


\begin{proof}
Let $\xi = \frac{\log\lambda}{2\pi i}$ so that $a_{k} = 2\pi i(k - \xi)$. Since we use the principal branch of the logarithm, this maps $\lambda \in \C \setminus \{0\}$ to the strip
$-\frac{1}{2} < \Re \xi \leq \frac{1}{2}$,
with $\lambda > 0$ corresponding to $\Re\xi = 0$ and $\lambda < 0$ to $\Re\xi = \frac{1}{2}$.
Since $|a_{k}| = 2\pi |k - \xi|$, the above chains are readily verified by considering 
$|x-\xi|^2 = (x-\Re\xi)^2+(\Im\xi)^2$ for real~$x$.
For $k \in \Z$, $|k| \geq 1$, we have $|k - \xi| \geq |k - \Re\xi| \geq |k| - \frac{1}{2}$ and hence $|a_{k}| \geq 2\pi (|k| - \frac{1}{2})$.
\end{proof}

By definition, in an asymptotic expansion, the $n+1$st term should be infinitesimal with respect to the $n$th term. 
Thus we have to consider partial sums of the Fourier series along the chains detailed in Lemma~\ref{lem:pole-ordering} which truncate the chain at a link where there is strict inequality. In other words, we consider only those finite subsets of poles $F \subseteq S$ satisfying 
$$
	\max\{ |a| : a \in F \} < \min\{ |a| : a \in S \setminus F \} \eqdef \mu.
$$
Saying that \eqref{eq:FS-poles} gives an asymptotic expansion means that the
remaining tail is of the order of~$\mu^{-n}$. 

Let us establish some notation. As in \eqref{def:poles}, we let $Z = \Z$ if $\lambda \neq 1$
and $Z = \Z \setminus \{0\}$ if $\lambda = 1$. Letting $a_{k} = 2\pi i k - \log\lambda$, we define
\begin{equation}
\label{def:heads}
	\begin{alignedat}{2}
		    Z_{m} &= \{ k \in Z : |k| \leq m \},
		    	      &\quad
			F_{m} &= \{ a_{k} : k \in Z_{m} \},
		          \\
		Z_{m}^{+} &= \{ k \in Z : |k| \leq m \} \cup \{m+1\},
		    	      &\quad
		F_{m}^{+} &= \{ a_{k} : k \in Z_{m}^{+} \},
		          \\
		Z_{m}^{-} &= \{ k \in Z : |k| \leq m \} \cup \{-(m+1)\},
		    	      &\quad
		F_{m}^{-} &= \{ a_{k} : k \in Z_{m}^{-} \}
	\end{alignedat}
\end{equation}
for an integer $m \geq 0$, noting that for $\lambda = 1$, $Z_{0}$ is empty.
In accordance with Lemma~\ref{lem:pole-ordering}, we have the following possible choices for
$F$, along with the corresponding value(s) of $\mu = \min\{ |a| : a \in S \setminus F \}$:
\begin{equation}
\label{eq:heads-cases}
F = \begin{cases}
	F_{m}, F_{m}^{+},	&
	\\
	F_{m}, F_{m}^{-},	&
	\\
	F_{m},				&
	\\
	F_{m}^{+},			&
	\end{cases}
	\mu = \begin{cases}
		|a_{m+1}|, |a_{-(m+1)}|,	& \text{if $\Im\lambda > 0$,}
		\\
		|a_{-(m+1)}|, |a_{m+1}|,	& \text{if $\Im\lambda < 0$,}
		\\
		|a_{m+1}|,				& \text{if $\lambda > 0$ ($m > 0$ if $\lambda = 1$),}
		\\
		|a_{-(m+1)}|,			& \text{if $\lambda < 0$.}
	\end{cases}
\end{equation}
Note that $\mu = |a_{\pm(m+1)}| \geq 2\pi( m + \frac{1}{2}) \geq \pi$ in each of these cases.

We begin our study of \eqref{eq:FS-poles} by proving that it gives an asymptotic expansion for the Apostol-Bernoulli numbers $\B_{n}(0;\lambda) = \B_{n}(\lambda)$. 

\begin{prop}
\label{prop:approx-ABN}
Given $\lambda \in \C$, $\lambda \neq 0$, let $F$ be a finite subset of the set of poles $S$ of the generating function \eqref{eq:GF-AB} of $\B_{n}(x;\lambda)$ satisfying
$$
	\max\{ |a| : a \in F \} < \min\{ |a| : a \in S \setminus F \} = \mu.
$$
For all integers $n \geq 2$, we have
\begin{equation}
\label{eq:approx-ABN}
	\frac{\B_{n}(\lambda)}{n!} = -\sum_{a \in F} \frac{1}{a^{n}} + O(\mu^{-n}),
\end{equation}
where the constant implicit in the order term depends only on $\lambda$ and $F$.
\end{prop}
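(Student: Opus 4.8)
The plan is to read off the identity directly from the residue form \eqref{eq:FS-poles} of the Fourier expansion, evaluated at the endpoint $x=0$. Since $n \geq 2$, that representation is valid there, and it reads
$$
	\frac{\B_{n}(\lambda)}{n!} = -\sum_{a \in S} \frac{1}{a^{n}}.
$$
Splitting the sum over $S$ into the finite head $F$ and its complement, the asserted estimate \eqref{eq:approx-ABN} is thus equivalent to the statement that the tail satisfies
$$
	\left| \sum_{a \in S \setminus F} \frac{1}{a^{n}} \right| = O(\mu^{-n}),
$$
with an implicit constant depending only on $\lambda$ and $F$.

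The key idea is to peel off two powers of $|a|$, so as to leave behind a series that is convergent and independent of $n$. Using the triangle inequality together with the hypothesis $|a| \geq \mu$ for every $a \in S \setminus F$, I would write, for each such $a$,
$$
	\frac{1}{|a|^{n}} = \mu^{-n}\left(\frac{\mu}{|a|}\right)^{n} \leq \mu^{-n}\left(\frac{\mu}{|a|}\right)^{2},
$$
where the inequality uses $0 < \mu/|a| \leq 1$ and $n \geq 2$. Summing over $a \in S \setminus F$ gives
$$
	\left| \sum_{a \in S \setminus F} \frac{1}{a^{n}} \right| \leq \mu^{-n}\,\mu^{2}\sum_{a \in S \setminus F}\frac{1}{|a|^{2}} \leq \mu^{-n}\,\mu^{2}\sum_{a \in S}\frac{1}{|a|^{2}},
$$
so it only remains to verify that $\sum_{a \in S}|a|^{-2}$ converges.

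That convergence follows at once from the last assertion of Lemma~\ref{lem:pole-ordering}: for $|k| \geq 1$ we have $|a_{k}| \geq 2\pi(|k|-\tfrac{1}{2})$, so the general term is dominated by a constant multiple of $(|k|-\tfrac{1}{2})^{-2}$, whose sum converges, while the single term $a_{0}$ (present only when $\lambda \neq 1$) contributes the finite quantity $|\log\lambda|^{-2}$. Hence $C \eqdef \mu^{2}\sum_{a \in S}|a|^{-2}$ is finite and depends only on $\lambda$ (through $S$) and on $F$ (through $\mu$), but not on $n$, which is exactly the content of \eqref{eq:approx-ABN}.

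I do not expect a genuine obstacle here; the one point that requires care is making the implicit constant uniform in $n$, and this is precisely what the estimate $(\mu/|a|)^{n} \leq (\mu/|a|)^{2}$ secures, trading the $n$-dependence for the fixed exponent $2$ at which the sum over the poles is absolutely summable. The same device should serve as the workhorse for the analogous estimates at a general $z \in \C$; there the only additional ingredient is control of the factor $e^{az}$ appearing in \eqref{eq:FS-poles}, which is where the exponential dependence on $|z|$ announced in the introduction will enter.
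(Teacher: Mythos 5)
Your proposal is correct and follows essentially the same route as the paper: both reduce to bounding the tail $\sum_{a \in S \setminus F} |a|^{-n}$ by factoring out $\mu^{-n}$ and using $(\mu/|a|)^{n} \leq (\mu/|a|)^{2}$ for $n \geq 2$, then invoking the bound $|a_{k}| \geq 2\pi(|k|-\tfrac{1}{2})$ from Lemma~\ref{lem:pole-ordering} to get absolute convergence of $\sum_{a} |a|^{-2}$. The only cosmetic difference is that the paper relabels the poles in increasing order of magnitude and writes the constant as $c_{M,\lambda} = \sum_{k>M} |\alpha_{M+1}/\alpha_{k}|^{2}$, whereas you keep the indexing by $S \setminus F$; the estimates are identical.
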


In this sense then, using the appropriate approximating sums over the sets $F$, the Fourier series \eqref{eq:FS-poles} of $\B_{n}(x;\lambda)$ at $x=0$ is an asymptotic expansion for the Apostol-Bernoulli numbers as $n \to \infty$.

\begin{proof}
Relabel the set of poles in increasing order of magnitude as
$|\alpha_{0}| \leq |\alpha_{1}| \leq \cdots \leq |\alpha_{M}| \leq \cdots$.
The estimate $|a_{k}| \geq 2\pi(|k|-\frac{1}{2})$ from Lemma~\ref{lem:pole-ordering}
shows that $\sum_{k} \alpha_{k}^{-n}$ is absolutely convergent for $n \geq 2$. For any $M \geq 0$, we have
$$
\begin{aligned}
	   	  \sum_{k = M+1}^{\infty} \frac{1}{|\alpha_{k}|^{n}}
	&=    \frac{1}{|\alpha_{M+1}|^{n}} 
	      \sum_{k = M+1}^{\infty} \left| \frac{\alpha_{M+1}}{\alpha_{k}}\right|^{n}
	\\
	&\leq \frac{1}{|\alpha_{M+1}|^{n}} 
	      \sum_{k = M+1}^{\infty} \left| \frac{\alpha_{M+1}}{\alpha_{k}}\right|^{2}
	=     \frac{c_{M,\lambda}}{|\alpha_{M+1}|^{n}}, 
\end{aligned}$$
where $c_{M,\lambda}$ is a constant depending only on $M$ and $\lambda$. This applies to any tail, in particular to the tail $S \setminus F$.
\end{proof}

\begin{remark}
\label{rk:O-constants}
We can say more about the constant in the above proof by considering the Hurwitz zeta function for real values of the parameters. For example, for $F = F_{m} = \{ k \in \Z : |k| \leq m \}$ (excluding $0$ if $\lambda = 1$) and $\Im\lambda > 0$, for which the next pole in order of magnitude is $|a_{m+1}|$, if we denote the constant by $c_{m,\lambda}$ and let
$\xi = \frac{\log\lambda}{2\pi i}$, we have, by Lemma~\ref{lem:pole-ordering},
$$
\begin{aligned}
            c_{m,\lambda}
       &=   |\alpha_{m+1}|^{2} \sum_{|k| \geq m+1}^{\infty} \frac{1}{|a_{k}|^{2}}
	   =    |m+1 - \xi|^{2}
	        \sum_{|k| \geq m+1}^{\infty} \frac{1}{|k - \xi|^{2}}
	  \\
	  &\leq |m+1 - \xi|^{2}
	        \sum_{|k| \geq m+1}^{\infty} \frac{1}{\left(|k| - \frac{1}{2}\right)^{2}}
	   =    2 |m+1 - \xi|^{2} \zeta(2,m + \tfrac{1}{2}).
\end{aligned}
$$
Simply comparing the sum $\zeta(\sigma,q) = \sum_{k=0}^{\infty} (k + q)^{-\sigma}$
for $\sigma > 1$, $q > 0$ with the corresponding integral yields
$$
	\zeta(\sigma,q) < \left(1 + \frac{q}{\sigma - 1}\right) q^{-\sigma},
$$
which applied to the above estimate for $c_{m,\lambda}$ gives
$$
	     c_{m,\lambda}
	\leq 2 |m+1-\xi|^{2} \left(1 + \frac{m + \tfrac{1}{2}}{n-1}\right)
	     \frac{1}{\left(m + \tfrac{1}{2}\right)^{2}}.
$$
Since $\lim_{m \to \infty} (m+1-\xi)/(m+\frac{1}{2}) = 1$, this shows that for fixed $m$ and $n \gg 0$, $c_{m,\lambda}$ is bounded independent of $m$ and we can then replace it with a constant $c_{\lambda}$ depending only on $\lambda$. Thus the tail can be estimated by $c_{\lambda}|a_{m+1}|^{-n}$ for fixed $m$ and $n \gg 0$.
In general, for any $n \geq 2$, we still have an estimate of the form
$c_{\lambda} |m+1-\xi|$ and hence the tail can always be estimated by $c_{\lambda}|a_{m+1}|^{-(n-1)}$.
\end{remark}

\section{Approximations to $\B_{n}(z;\lambda)$ on the complex plane}
\label{sec:approx-AB}

We now come to the central result of this paper, that the Fourier series \eqref{eq:FS-poles} of $\B_{n}(x;\lambda)$ for $x \in [0,1]$ extends to the complex plane as an asymptotic expansion 
for $\B_{n}(z;\lambda)$ given $z \in \C$. We state this in a more precise form, since we also obtain information on the implicit constants in the asymptotic approximation.

\begin{theorem}
\label{thm:approx-AB}
Given $\lambda \in \C$, $\lambda \neq 0$, let $F$ be a finite subset of the set of poles $S$ of the generating function \eqref{eq:GF-AB} of $\B_{n}(x;\lambda)$ satisfying
$$
	\max\{ |a| : a \in F \} < \min\{ |a| : a \in S \setminus F \} \eqdef \mu.
$$
For all integers $n \geq 2$, we have, uniformly for $z$ in a compact subset $K$ of~$\C$,
\begin{equation}
\label{eq:approx-AB}
	\frac{\B_{n}(z;\lambda)}{n!} = -\sum_{a \in F} \frac{e^{az}}{a^{n}} 
	                               + O\left(\frac{e^{\mu|z|}}{\mu^{n}}\right),
\end{equation}
where the constant implicit in the order term depends on $\lambda, F$ and $K$.

In fact, for $n \gg 0$, the order constant may be taken equal to the value of the constant for the Apostol-Bernoulli numbers, corresponding to $z = 0$, thus eliminating its dependence on $K$ (how large $n$ has to be of course still depends on the compact set $K$).
\end{theorem}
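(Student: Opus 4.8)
The plan is to reduce the statement to the numbers case, Proposition~\ref{prop:approx-ABN}, by exploiting the Appell property~\eqref{eq:appell}. Iterating \eqref{eq:appell} gives $\frac{d^{j}}{dz^{j}}\B_{n}(z;\lambda)=\frac{n!}{(n-j)!}\B_{n-j}(z;\lambda)$ for $0\le j\le n$, so the Taylor expansion of the polynomial $\B_{n}(\,\cdot\,;\lambda)$ at the origin is the exact identity
$$
\frac{\B_{n}(z;\lambda)}{n!}=\sum_{j=0}^{n}\frac{\B_{n-j}(\lambda)}{(n-j)!}\,\frac{z^{j}}{j!}.
$$
For the indices $m=n-j\ge2$ I would insert Proposition~\ref{prop:approx-ABN} in the form $\frac{\B_{m}(\lambda)}{m!}=-\sum_{a\in F}a^{-m}+R_{m}$, noting (from the proof of that proposition, whose constant $c_{M,\lambda}$ is independent of the exponent) that $|R_{m}|\le c_{0}\mu^{-m}$ with $c_{0}=c_{0}(\lambda,F)$ uniform in $m$. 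The two indices $m\in\{0,1\}$ lie outside the range of the proposition and must be retained as explicit low-order terms, with $\B_{0}(\lambda)=0$ when $\lambda\ne1$.

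The decisive step is to recognize the leading sum as a truncated exponential. Expanding $\sum_{a\in F}\frac{e^{az}}{a^{n}}=\sum_{j\ge0}\bigl(\sum_{a\in F}a^{-(n-j)}\bigr)\frac{z^{j}}{j!}$ and subtracting it from the Taylor identity, the error
$$
D_{n}(z)\eqdef\frac{\B_{n}(z;\lambda)}{n!}+\sum_{a\in F}\frac{e^{az}}{a^{n}}
$$
collapses into $D_{n}(z)=A_{n}(z)+B_{n}(z)$, where $A_{n}(z)=\sum_{j=0}^{n-2}R_{n-j}\frac{z^{j}}{j!}$ gathers the number-remainders, while $B_{n}(z)$ gathers the two low-order coefficients $\B_{1}(\lambda)+\sum_{a\in F}a^{-1}$ and $\B_{0}(\lambda)+|F|$ (attached to $z^{n-1}$ and $z^{n}$) together with the tail $\sum_{a\in F}\sum_{j>n}a^{\,j-n}\frac{z^{j}}{j!}$ left over from completing the exponential series. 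With $r=|z|$, the estimate $|R_{m}|\le c_{0}\mu^{-m}$ immediately yields
$$
|A_{n}(z)|\le c_{0}\mu^{-n}\sum_{j=0}^{n-2}\frac{(\mu r)^{j}}{j!}\le c_{0}\,\frac{e^{\mu r}}{\mu^{n}},
$$
which already has the shape of the claimed order term. Every piece of $B_{n}(z)$ is a constant (depending on $\lambda,F$) times $r^{m}/m!$ with $m\ge n-1$; using $(n-1+i)!\ge(n-1)!\,i!$ these sum to $O(r^{n-1}/(n-1)!)$ uniformly on the compact set $K$. Since $\mu\ge\pi$ and $r$ is bounded on $K$, the factor $r^{n-1}/(n-1)!$ decays faster than any geometric $\mu^{-n}$, so $B_{n}$ is absorbed into $O(e^{\mu r}\mu^{-n})$ with a constant depending on $\lambda,F,K$. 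This establishes \eqref{eq:approx-AB}.

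I expect the genuine difficulty to lie in the concluding refinement, that for $n\gg0$ the order constant may be taken to be the numbers' constant $c_{0}$ alone. The mechanism is that the bound on $A_{n}$ above is strict, leaving a deficit $c_{0}\mu^{-n}\sum_{j\ge n-1}(\mu r)^{j}/j!$, while $B_{n}(z)=O(r^{n-1}/(n-1)!)$ is factorially, hence eventually negligibly, small compared with $c_{0}\mu^{-n}$, uniformly for $z\in K$; once $n$ is large enough (depending on $K$ only through $\max_{z\in K}|z|$) the whole of $D_{n}$ is therefore captured by the $A_{n}$-bound with constant $c_{0}$. The delicate point is that the crude triangle-inequality bound on $A_{n}$ is not tight near $z=0$, so to pin the constant down to exactly $c_{0}$ rather than $c_{0}+\varepsilon$ one must compare the true size of $A_{n}$---governed by the specific remainders $R_{m}=-\sum_{a\in S\setminus F}a^{-m}$, which for large $m$ are dominated by the poles of modulus exactly $\mu$---against both the deficit and the factorially small $B_{n}$. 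As a consistency check, on the segment $z=x\in[0,1]$ the full series \eqref{eq:FS-poles} converges and gives the exact identity $D_{n}(x)=-\sum_{a\in S\setminus F}e^{ax}/a^{n}$, whence $|D_{n}(x)|\le|\lambda|^{-x}c_{0}\mu^{-n}$, in agreement with the asserted constant.
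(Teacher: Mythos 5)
Your proposal is correct and follows essentially the same route as the paper: expand $\B_{n}(z;\lambda)$ at $0$ via the Appell/binomial identity, insert Proposition~\ref{prop:approx-ABN} for the numbers, recognize the leading sum as a truncated exponential, and bound the two resulting error pieces (the geometric remainders by $c_{0}\mu^{-n}e^{\mu|z|}$, the exponential tails by a factorially small quantity absorbed for $n\gg0$). You are in fact slightly more careful than the paper in isolating the indices $n-j\in\{0,1\}$ where the proposition does not directly apply, and your closing remark about the delicacy of pinning the constant down to exactly $c_{0}$ is a fair observation that applies equally to the paper's own argument.
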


\begin{proof}
We will use Proposition~\ref{prop:approx-ABN}, which is the case $z=0$, and the ``binomial formula''
\begin{equation}
\label{eq:binom}
	\B_{n}(x+y;\lambda) = \sum_{k=0}^{n} \binom{n}{k} \B_{n-k}(x;\lambda) \, y^{k}
\end{equation}
which may be proved directly from the generating function \eqref{eq:GF-AB} or from the derivative relation~\eqref{eq:appell}. It is well-known that for a given polynomial family, in fact \eqref{eq:binom} is equivalent to~\eqref{eq:appell}
(for fixed~$x$, \eqref{eq:binom} is the Taylor expansion of $\B_{n}(x+y;\lambda)$ in powers of~$y$).
Both serve to define the notion of an Appell sequence. For $z \in \C$, writing $z = z + 0$, \eqref{eq:binom} and Proposition~\ref{prop:approx-ABN} yield
$$
\begin{aligned}
	   \frac{\B_{n}(z;\lambda)}{n!}
	&= \sum_{k=0}^{n} \frac{\B_{n-k}(\lambda)}{(n-k)!} \, \frac{z^{k}}{k!}
	 = \sum_{k=0}^{n} \left(- \sum_{a \in F} \frac{1}{a^{n-k}} + O(\mu^{-n+k})\right) 
	   \, \frac{z^{k}}{k!}
	\\
	&= - \sum_{a \in F} \sum_{k=0}^{n} \frac{1}{a^{n-k}} \frac{z^{k}}{k!}
	   + \sum_{k=0}^{n} O(\mu^{-n+k}) \frac{z^{k}}{k!},
\end{aligned}
$$
where the implicit constant $c$ is that corresponding to $z = 0$ and only depends on $F,\lambda$.
Now, consider the partial sums and tails of the exponential series,
$$
	e_{n}(w) = \sum_{k=0}^{n} \frac{w^{k}}{k!},
	\quad
	e_{n}^{*}(w) = e^{w} - e_{n}(w) = \sum_{k=n+1}^{\infty} \frac{w^{k}}{k!}.
$$
The first summand above is
$$
	  - \sum_{a \in F} \frac{e_{n}(az)}{a^{n}}
	= - \sum_{a \in F} \frac{e^{az}}{a^{n}} + \sum_{a \in F} \frac{e_{n}^{*}(az)}{a^{n}}.
$$
To prove the theorem, we need to show that the resulting extra terms have the correct order of magnitude. Indeed, 
$$
	     \left|\sum_{k=0}^{n} O(\mu^{-n+k}) \frac{z^{k}}{k!}\right|
	\leq c \sum_{k=0}^{n} \mu^{-n+k} \, \frac{|z|^{k}}{k!}
	=    c \mu^{-n} \sum_{k=0}^{n} e_{n}(\mu|z|)
	\leq c \mu^{-n} e^{\mu|z|}.
$$
The tails of the exponential series may be estimated using the complex version of the Lagrange remainder for Taylor series (which is actually an upper bound for the modulus rather than an equality as in the real case):
$$
	|e_{n}^{*}(w)| \leq e^{\Re^{+}(w)} \frac{|w|^{n+1}}{(n+1)!},
	\quad
	\Re^{+}(w) = \max\{\Re(w), 0\}.
$$
Since $|a| < \mu$ for all $a \in F$, we have
$$
	     \left|\frac{e_{n}^{*}(az)}{a^{n}}\right|
	\leq |a| e^{|az|}\frac{|z|^{n+1}}{(n+1)!}  
	<    \mu e^{\mu |z|} \frac{|z|^{n+1}}{(n+1)!}  
$$
so that 
$$
	     \left|\sum_{a \in F} \frac{e_{n}^{*}(az)}{a^{n}}\right|
	\leq \#F \mu e^{\mu |z|} \frac{|z|^{n+1}}{(n+1)!}
$$
(here $\#F$ denotes the number of elements in~$F$)
and the latter term is less than $c e^{\mu |z|} \mu^{-n}$ when
$$
	\#F \frac{(\mu |z|)^{n+1}}{(n+1)!} < c,
$$
a condition which certainly holds for $n \gg 0$, uniformly for $z$ in a compact subset 
$K \subseteq \C$. In general, since this sequence is bounded, for any $n \geq 2$ we obtain a constant independent of~$n$, depending on $\lambda,F,K$.
\end{proof}

\begin{remark}
Since the point is that we get an asymptotic series in $n$, the $e^{\mu|z|}$ term could be absorbed into the constant in the order term, but doing this hides the numerical behavior of the approximation. Indeed, it is quickly apparent in computation that, even for rather small values of $|z|$, the rapid growth of this factor offsets the action of $\mu^{-n}$ until $n$ is quite large.
\end{remark}

\begin{example}[Bernoulli polynomials] As a special case of the theorem, we derive Dilcher's results in~\cite{Dil} for the approximation of Bernoulli polynomials.
\end{example}

\begin{corollary}
\label{cor:approx-B}
The Bernoulli polynomials satisfy, uniformly on a compact subset $K$ of~$\C$, the estimates
\begin{equation}
\label{eq:approx-B}
\begin{aligned}
     \frac{(-1)^{n-1} (2\pi)^{2n}}{2 (2n)!} \,B_{2n}(z)
  &= \cos(2\pi z) + O\left(\frac{e^{4\pi|z|}}{2^{n}}\right),
  \\[2pt]
     \frac{(-1)^{n-1} (2\pi)^{2n+1}}{2 (2n+1)!} \,B_{2n+1}(z)
  &= \sin(2\pi z) + O\left(\frac{e^{4\pi|z|}}{2^{n}}\right),
\end{aligned}
\end{equation}
where the implicit constant depends on $K$. Moreover, for $n \gg 0$ this constant can be made
independent of $K$, equal to the constant for the Bernoulli numbers, corresponding
to the case $z = 0$.
\end{corollary}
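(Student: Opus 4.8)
The plan is to specialize Theorem~\ref{thm:approx-AB} to $\lambda = 1$, where $\B_N(z;1) = B_N(z)$, and to read off the two estimates by matching the leading term of the resulting asymptotic expansion against $\cos(2\pi z)$ and $\sin(2\pi z)$.

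First I would fix the data. For $\lambda = 1$ the origin is a removable singularity, so the poles are $S = \{2\pi i k : k \in \Z \setminus \{0\}\}$, i.e.\ $a_k = 2\pi i k$. By Lemma~\ref{lem:pole-ordering}(c) the two poles of least modulus are $a_1 = 2\pi i$ and $a_{-1} = -2\pi i$, both of modulus $2\pi$; as these are tied, the strict-inequality hypothesis forces the smallest admissible truncation to be $F = \{2\pi i, -2\pi i\}$, and then $\mu = |a_{\pm 2}| = 4\pi$.

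With $N$ denoting the degree ($N = 2n$ or $N = 2n+1$), Theorem~\ref{thm:approx-AB} gives
$$
	\frac{B_N(z)}{N!} = -\frac{e^{2\pi i z}}{(2\pi i)^N} - \frac{e^{-2\pi i z}}{(-2\pi i)^N}
	+ O\!\left(\frac{e^{4\pi|z|}}{(4\pi)^N}\right),
$$
uniformly on $K$. The heart of the matter is then the elementary simplification of the leading term. Writing $(\pm 2\pi i)^N = (2\pi)^N (\pm i)^N$ and collecting, the pair of exponentials becomes $e^{2\pi i z} + (-1)^N e^{-2\pi i z}$, which equals $2\cos(2\pi z)$ for $N$ even and $2i\sin(2\pi z)$ for $N$ odd. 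Tracking $i^N$ --- namely $(-1)^n$ when $N = 2n$ and $(-1)^n i$ when $N = 2n+1$ --- I would obtain the leading term in the form $\frac{2(-1)^{n+1}}{(2\pi)^N}\cos(2\pi z)$ and $\frac{2(-1)^{n+1}}{(2\pi)^N}\sin(2\pi z)$, respectively. Multiplying through by the normalizing factor $\frac{(-1)^{n-1}(2\pi)^N}{2}$ (the factorial $N!$ already sitting in the denominator on the left) and using $(-1)^{n-1}(-1)^{n+1} = 1$ then collapses the leading term to precisely $\cos(2\pi z)$ or $\sin(2\pi z)$, reproducing \eqref{eq:approx-B}.

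For the error, the same factor scales $O(e^{4\pi|z|}/(4\pi)^N)$ by $(2\pi)^N/2$, giving $O\!\left((2\pi/4\pi)^N e^{4\pi|z|}\right) = O(2^{-N} e^{4\pi|z|})$; since $2^{-N} \le 2^{-n}$ in both parities, this is subsumed by the stated $O(e^{4\pi|z|}/2^n)$. The uniformity in $z$ over $K$ and the claim that for $n \gg 0$ the implicit constant may be taken equal to the one for the Bernoulli numbers ($z = 0$) are inherited directly from the corresponding clauses of Theorem~\ref{thm:approx-AB}. I expect no real obstacle: the only delicate point is the bookkeeping of the powers $i^N$ and signs $(-1)^n$, where a single slip would interchange $\cos$ and $\sin$ or flip the overall sign; the replacement of the sharper $2^{-N}$ by $2^{-n}$ is a deliberate, if slightly lossy, choice that keeps the two estimates in a uniform form.
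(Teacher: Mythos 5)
Your proposal is correct and follows essentially the same route as the paper: specialize Theorem~\ref{thm:approx-AB} to $\lambda=1$ with the truncation $F=\{\pm 2\pi i\}$, $\mu = 4\pi$ (the paper phrases this as the case $m=1$ of \eqref{eq:approx-Ber}), and simplify the conjugate pair of leading terms into $\cos(2\pi z)$ or $\sin(2\pi z)$. Your sign bookkeeping checks out, and your remark that the true error $2^{-N}$ is sharper than the stated $2^{-n}$ is a fair observation consistent with the paper's (Dilcher-matching) normalization.
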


\begin{proof}
The Bernoulli case corresponds to $\lambda = 1$, where $\xi = \frac{\log\lambda}{2\pi i} = 0$ and the set of poles is $S = \{ 2 \pi i k : k \in \Z, k \neq 0 \}$. Hence Theorem~\ref{thm:approx-AB}, after multiplying by $(2\pi i)^{n}$, states that
\begin{equation}
\label{eq:approx-Ber}
	  \frac{(2\pi i)^{n} B_{n}(z)}{n!}
	= - \sum_{0 < |k| \leq m} \frac{e^{2\pi i k z}}{k^{n}}
	  + O\left(\frac{e^{2\pi(m+1)|z|}}{(m+1)^{n}}\right).
\end{equation}
The result stated above is the case $m=1$.
\end{proof}

\section{Asymptotic behavior of $\B_{n}(z;\lambda)$}
\label{sec:asympt}

From now on, assume $\lambda \neq 1$. Then the pole set $S = \{ a_{k} = 2\pi i k - \log\lambda : k \in \Z \}$ contains $a_{0} = -\log\lambda$ and hence the series \eqref{eq:approx-AB} 
begins with the term $-e^{a_{0}z} a_{0}^{-n} = (-1)^{n-1} \lambda^{-z} \log^{n}\lambda$. It makes sense to normalize the series by considering the modified expression
\begin{equation}
\label{def:ab-normalized}
	\beta_{n}(z;\lambda) = (-1)^{n-1} \frac{\log^{n} \lambda}{n!} \lambda^{z} \B_{n}(z;\lambda).
\end{equation}
Now~\eqref{eq:FS-AB} becomes
\begin{equation}
\label{eq:FS-AB-mod}
	  \beta_{n}(x;\lambda)
	= \sum_{k \in \Z} \frac{e^{2 \pi i k x}}{(1 - \Lambda k)^{n}},
	\quad
	x \in [0,1],\
	\Lambda = \frac{2\pi i}{\log\lambda}.
\end{equation}
It is straightforward to check that the transformation
$\Lambda = \frac{2\pi i}{\log \lambda}$ maps the region 
$\C \setminus ((-\infty,0] \cup \{1\})$ to the region $\{ \Lambda \in \C : |\Lambda-1| > 1, |\Lambda+1| > 1 \}$ which is the exterior of the ``figure eight'' formed by the union of the two closed disks $|\Lambda \pm 1| \leq 1$, tangent at~$0$. The negative real axis $(-\infty,0)$ is mapped to the circumference $|\Lambda-1|=1$ minus $\Lambda=0$. 
Since $|1-k\Lambda|=|1-k\xi^{-1}| = |\xi^{-1}| |k -\xi|$, where $\xi = \frac{\log\lambda}{2\pi i}$, the ordering of the terms is the same as in Lemma~\ref{lem:pole-ordering}.

The asymptotic approximation \eqref{eq:approx-AB} changes accordingly.

\begin{prop}
\label{prop:approx-AB-mod}
Let $\lambda \in \C$, $\lambda \neq 0,1$.
For any integer $m \geq 0$ we have, for $z$ in a compact subset $K$ of $\C$,
\begin{equation}
\label{eq:approx-AB-mod}
	(-1)^{n-1} \frac{\log^{n} \lambda}{n!} \lambda^{z} \B_{n}(z;\lambda)
	= \sum_{k \in I_{m}} \frac{e^{2\pi i k z}}{\bigl(1- \frac{2\pi i k}{\log\lambda}\bigr)^{n}}
	                       + O\Biggl(
	                          \frac{e^{(2|\log\lambda|+2\pi(m+1))|z|}}
	                               {\Bigl|1 \pm \frac{2\pi i(m+1)}{\log\lambda}\Bigr|^{n}}
	                          \Biggr).
\end{equation}
The constant implicit in the order term depends on $\lambda,m$ and $K$. For $n \gg 0$ it can be taken equal to the constant for the case $z=0$, thus making it independent of $K$.
Here $I_{m} = Z_{m}$ or $Z_{m}^{\pm}$ are as in \eqref{def:heads},
with the signs chosen according to~\eqref{eq:heads-cases}.
\end{prop}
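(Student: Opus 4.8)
The plan is to obtain this proposition directly from Theorem~\ref{thm:approx-AB} by multiplying through by the normalizing factor in \eqref{def:ab-normalized}; no new analytic input is needed. Concretely, I would take $F = \{a_{k} : k \in I_{m}\}$, where $I_{m} = Z_{m}$ or $Z_{m}^{\pm}$ is chosen according to \eqref{eq:heads-cases}, so that $F$ satisfies the maximality hypothesis of the theorem and $\mu = |a_{\pm(m+1)}|$ is the modulus of the first excluded pole. The remark preceding the proposition, that $|1 - k\Lambda| = |\xi|^{-1}|k - \xi|$ preserves the ordering of Lemma~\ref{lem:pole-ordering}, guarantees that this $I_{m}$ really does correspond to a legitimate truncation $F$. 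Theorem~\ref{thm:approx-AB} then reads $\B_{n}(z;\lambda)/n! = -\sum_{a\in F} e^{az}/a^{n} + O(e^{\mu|z|}/\mu^{n})$ uniformly on the compact set $K$, and the entire task is to rewrite both sides after multiplying by $(-1)^{n-1}\log^{n}\lambda\,\lambda^{z}$.

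First I would simplify the main sum using two elementary identities. Since $a_{k} = 2\pi i k - \log\lambda$ and $\lambda^{z} = e^{z\log\lambda}$, one has $\lambda^{z} e^{a_{k} z} = e^{2\pi i k z}$, which removes the $\lambda^{z}$ and produces the numerators $e^{2\pi i k z}$. For the denominators I would factor $a_{k} = -\log\lambda\,(1 - \frac{2\pi i k}{\log\lambda})$, so that $a_{k}^{n} = (-1)^{n}\log^{n}\lambda\,(1 - \frac{2\pi i k}{\log\lambda})^{n}$. Collecting the signs, the prefactor $(-1)^{n-1}$, the explicit $-1$ in front of the sum, and the $(-1)^{n}$ coming from $a_{k}^{n}$ multiply to $+1$, while $\log^{n}\lambda$ cancels against the $\log^{n}\lambda$ in $a_{k}^{n}$. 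This produces exactly the main term $\sum_{k\in I_{m}} e^{2\pi i k z}/(1 - \frac{2\pi i k}{\log\lambda})^{n}$ of \eqref{eq:approx-AB-mod}.

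Next I would transform the error term. Multiplying $O(e^{\mu|z|}/\mu^{n})$ by $|(-1)^{n-1}\log^{n}\lambda\,\lambda^{z}| = |\log\lambda|^{n}|\lambda^{z}|$ yields a bound of the form $|\lambda^{z}|\,e^{\mu|z|}\,(|\log\lambda|/\mu)^{n}$. The denominator falls out from the same factorization: since $\mu = |a_{\pm(m+1)}| = |\log\lambda|\,|1 \pm \frac{2\pi i(m+1)}{\log\lambda}|$, we have $(|\log\lambda|/\mu)^{n} = |1 \pm \frac{2\pi i(m+1)}{\log\lambda}|^{-n}$, with the sign inherited from \eqref{eq:heads-cases}. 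For the exponential factor I would use the crude bounds $|\lambda^{z}| = e^{\Re(z\log\lambda)} \le e^{|\log\lambda|\,|z|}$ and, via the triangle inequality $\mu \le 2\pi(m+1) + |\log\lambda|$, the estimate $e^{\mu|z|} \le e^{(2\pi(m+1)+|\log\lambda|)|z|}$. Their product gives $e^{(2|\log\lambda| + 2\pi(m+1))|z|}$, exactly the exponential appearing in \eqref{eq:approx-AB-mod}.

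Finally, the dependence of the implicit constant on $\lambda, m, K$, together with the assertion that for $n\gg 0$ it may be replaced by the constant for $z=0$, is inherited verbatim from the corresponding statement in Theorem~\ref{thm:approx-AB}, since multiplying by $\lambda^{z}$ only contributes the factor already absorbed into the exponential. I expect no genuine obstacle: the argument is pure bookkeeping. The only points requiring care are the sign accounting in the main term and confirming that the factorization of $a_{k}$ routes the powers of $\log\lambda$ correctly into both the summand denominators and the error denominator.
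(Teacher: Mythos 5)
Your proof is correct and follows exactly the paper's route: the paper likewise deduces the proposition from Theorem~\ref{thm:approx-AB} by multiplying through by $(-1)^{n-1}\log^{n}\lambda\,\lambda^{z}$, transforming the summands via the factorization $a_{k}=-\log\lambda\,(1-\tfrac{2\pi ik}{\log\lambda})$, and bounding the order term with $|\lambda^{z}|\le e^{|z\log\lambda|}$ and $\mu\le 2\pi(m+1)+|\log\lambda|$. Your write-up simply supplies the bookkeeping the paper declares ``straightforward to check.''
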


\begin{proof}
It is straightforward to check that the general term in the sum changes to the expression above, and the order term gets multiplied by $|\lambda^{z} \log^{n}\lambda|$. Estimating
$|\lambda^{z}| \leq e^{|z\log\lambda|}$ and $\mu = |a_{\pm(m+1)}| \leq 2\pi(m+1) + |\log\lambda|$ gives the order term.
\end{proof}

\begin{remark}
If we don't incorporate the term $\lambda^{z}$ into the definition \eqref{def:ab-normalized} of 
$\beta_{n}$ above, we can drop the ``$2$'' from $|\log\lambda|$ inside the $O$ term.
\end{remark}

\begin{corollary}
\label{cor:0th-approx}
For $\lambda \in \C$, $\lambda \neq 0,1$ and $\lambda \notin (-\infty,0)$, we have, for $z$ in a compact subset $K$ of $\C$,
$$
	(-1)^{n-1} \frac{\log^{n} \lambda}{n!} \lambda^{z} \B_{n}(z;\lambda)
	= 1 
      + O\left(
         \frac{e^{(2|\log\lambda|+2\pi)|z|}}{\min\left|1 \pm \frac{2\pi i}{\log\lambda}\right|^{n}}
         \right).
$$
The constant implicit in the order term depends on $\lambda$ and $K$. However, for $n \gg 0$ it can be taken equal to the constant when $z=0$.
In particular
$$
	\lim_{n \to \infty} (-1)^{n-1} \frac{\log^{n} \lambda}{n!} \B_{n}(z;\lambda)
	= \lambda^{-z}
$$
uniformly on compact subsets of~$\C$.
\end{corollary}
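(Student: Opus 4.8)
The plan is to recognize this as nothing more than the $m=0$ instance of Proposition~\ref{prop:approx-AB-mod}, so the work lies in verifying that the truncation at $m=0$ is legitimate and in extracting the limit from the resulting estimate. First I would observe that the hypotheses $\lambda \neq 0,1$ together with $\lambda \notin (-\infty,0)$ place $\lambda$ in exactly one of the cases $\Im\lambda > 0$, $\Im\lambda < 0$, or $\lambda > 0$ (with $\lambda \neq 1$) of Lemma~\ref{lem:pole-ordering}. In each of these the pole $a_0 = -\log\lambda$ is strictly smallest in modulus, $|a_0| < \min\{|a_1|,|a_{-1}|\}$, so the truncation $F = F_0 = \{a_0\}$ with strict inequality is admissible. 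The excluded case $\lambda < 0$ is precisely the one in which $|a_0| = |a_1|$ and no such strict truncation exists, which is why the negative real axis must be barred.

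Next I would specialize Proposition~\ref{prop:approx-AB-mod} to $m=0$. Since $\lambda \neq 1$ we have $Z_0 = \{0\}$, so the sum collapses to the single term $e^{2\pi i \cdot 0 \cdot z}/(1-0)^{n} = 1$, which is the claimed main term. The denominator of the order term becomes $|1 \pm \frac{2\pi i}{\log\lambda}|^{n}$, and the $\min$ over the two signs simply records whichever of $a_{\pm 1}$ is the next pole in the ordering of Lemma~\ref{lem:pole-ordering}; for $\lambda > 0$ the two moduli coincide, so the minimum is unambiguous. The exponential numerator and the sharpening for $n \gg 0$ (replacing the constant by the one for $z=0$) are inherited directly from the Proposition.

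For the limiting statement I would divide through by $\lambda^{z}$ to obtain
$$
	(-1)^{n-1}\frac{\log^{n}\lambda}{n!}\B_{n}(z;\lambda) = \lambda^{-z}\bigl(1 + O(\cdots)\bigr),
$$
and then show the order term tends to $0$ uniformly on $K$. This is where the geometry recorded earlier in the section is decisive: the map $\Lambda = \frac{2\pi i}{\log\lambda}$ sends $\C \setminus ((-\infty,0] \cup \{1\})$---exactly our domain for $\lambda$---into the exterior of the figure eight $\{\,|\Lambda-1|>1,\ |\Lambda+1|>1\,\}$, which is precisely the statement $\min|1 \pm \frac{2\pi i}{\log\lambda}| > 1$. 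Hence the denominator grows geometrically in $n$ while the bounded factors $\lambda^{-z}$ and $e^{(2|\log\lambda|+2\pi)|z|}$ stay controlled on the compact $K$, forcing the error to $0$ and yielding the limit $\lambda^{-z}$.

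The main obstacle is exactly this strict inequality $\min|1 \pm \frac{2\pi i}{\log\lambda}| > 1$. On $(-\infty,0)$ one has $|1 - \frac{2\pi i}{\log\lambda}| = 1$ (the image lands on the boundary circle $|\Lambda-1|=1$), the geometric decay degenerates, and the conclusion fails; thus the exclusion of the negative real axis is genuinely necessary rather than a mere technical convenience.
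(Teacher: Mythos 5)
Your proposal is correct and follows exactly the paper's route: the paper's proof is the single line ``this is the case $m=0$ of Proposition~\ref{prop:approx-AB-mod} with $I_0 = Z_0 = \{0\}$,'' and you have simply spelled out the supporting details (admissibility of the truncation via Lemma~\ref{lem:pole-ordering}, and the strict inequality $\min\bigl|1 \pm \tfrac{2\pi i}{\log\lambda}\bigr| > 1$ from the figure-eight geometry) that the paper leaves implicit.
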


\begin{proof}
This is the case $m=0$ above, with the choice of $I_{0} = Z_{0}=\{0\}$.
\end{proof}

\section{Oscillatory phenomena}
\label{sec:oscillatory}

When $\lambda \in \R$, $\lambda \neq 0$, Lemma~\ref{lem:pole-ordering} shows that there are poles of equal modulus, which must be grouped in pairs in the asymptotic approximation. These pairs are indexed differently depending on the sign of $\lambda$, but their behavior turns out to be the same.
For $\lambda > 0$, equal moduli poles correspond to pairs of integers $\pm k$ with $k \geq 1$, and are conjugate:
$$
	a_{k} = 2\pi i k - \log\lambda, \quad a_{-k} = -2\pi i k - \log\lambda = \overline{a_{k}}.
$$
Writing the poles in polar form, $a_{k} = \rho_{k} e^{2\pi i \alpha_{k}}$ with $\rho_{k} = |a_{k}| > 0$ and $\alpha_{k} \in [0,1]$, this pair contributes to the asymptotic expansion \eqref{eq:approx-AB} with
$$
\begin{aligned}
	   \frac{e^{a_{k}z}}{a_{k}^{n}} + \frac{e^{\overline{a_{k}}z}}{\overline{a_{k}}^{\, n}}
	&= \lambda^{-z }\rho_{k}^{-n} 
	   \left(
			e^{2\pi i k z} e^{-2\pi i n \alpha_{k}}
		  + e^{-2\pi i k z} e^{2\pi i n \alpha_{k}}
	   \right).
\end{aligned}
$$
This simplifies to
\begin{equation}
\label{eq:cos-pos}
	2 \lambda^{-z} \rho_{k}^{-n} \cos(2 \pi (kz - n \alpha_{k})).
\end{equation}

Similarly, for $\lambda < 0$, equal moduli poles correspond to pairs of integers $\{-k,k+1\}$ with $k \geq 0$. It is easy to see that these are also conjugate:
$$
\begin{aligned}
	 a_{-k} &= -2\pi i k - \log\lambda
	         = -(2k+1)\pi i - \log(-\lambda),
	        \\
	a_{k+1} &= 2\pi i(k+1) - \log\lambda
	         = (2k+1)\pi i - \log(-\lambda)
\end{aligned}
$$
and hence, now writing $a_{k+1} = \rho_{k} e^{2\pi i \alpha_{k}}$ with $\rho_{k} = |a_{k+1}| > 0$ and $\alpha_{k} \in [0,1]$, the same reasoning shows that this pair contributes to the asymptotic expansion with 
\begin{equation}
\label{eq:cos-neg}
	2 (-\lambda)^{-z} \rho_{k}^{-n} \cos(\pi ((2k+1) z - 2 n \alpha_{k})).
\end{equation}

These terms cannot vanish unless $z \in \R$, in which case we will write $z = x$. When this is the case, having fixed $k$ and $x$, the way that the expressions in \eqref{eq:cos-pos} and \eqref{eq:cos-neg} vary with $n$ depends on the behavior of the sequences $kx - n \alpha_{k}$ and $(2k+1)x - 2n\alpha_{k}$ modulo~$1$, or equivalently, their fractional parts. 

The study of the fractional parts of $\beta - n \alpha$ where $\alpha,\beta \in \R$ are fixed and $n$ varies is the problem of real inhomogeneous Diophantine approximation. This is a difficult problem with an extensive literature and important ramifications. Its origins lie in Kronecker's Theorem, which states that if $\alpha$ is irrational, the fractional parts are dense in $[0,1]$. In fact, they are equidistributed modulo~$1$.
Of course if $\alpha$ is rational, the sequence is periodic of period the denominator of~$\alpha$. 


As far as we are concerned, then, such oscillatory terms may be considered ``chaotic'' unless
$\alpha_{k}$ is rational, which corresponds to the quotient of the conjugate pair of poles 
$a_{k}, \overline{a_{k}}$ being a root of unity.

The simplest appearance of an oscillatory term occurs in the ``0th order'' case of \eqref{eq:approx-AB} for 
$\lambda \in (-\infty,0)$.

\begin{prop}
\label{prop:l<0}
Let $\lambda < 0$. Then as $n \to \infty$, for $z$ in a compact subset $K$ of $\C$,
$$
	  \frac{(-1)^{n-1}\log^{n}\lambda}{n!} \lambda^{z} \B_{n}(z;\lambda)
	= 1 + \left(\frac{\log|\lambda|+\pi i}{\log|\lambda|-\pi i}\right)^{n} e^{2\pi i z}
	  + O\left(
	     \frac{e^{2(\pi + |\log\lambda|)|z|}}{\left|1 + \frac{2\pi i}{\log\lambda}\right|^{n}}
	     \right).
$$
The constant implicit in the order term depends only on $\lambda$ and $K$. Moreover, for $n \gg 0$ it may be taken equal to the corresponding constant for $z=0$, thus making it independent of $K$.

The part of the approximating terms in parentheses has modulus $1$, hence is periodic or dense in the unit circle according as its argument is a rational or irrational multiple of~$2\pi$.
In particular,
$$
	  \lim_{n \to \infty}
	  \left( \frac{(-1)^{n-1}\log^{n}\lambda}{n!} \lambda^{z} \B_{n}(z;\lambda)
	- \left(\frac{\log|\lambda|+\pi i}{\log|\lambda|-\pi i}\right)^{n} e^{2\pi i z} \right)
	= 1
$$
uniformly on compact subsets of~$\C$.
\end{prop}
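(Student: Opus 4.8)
The plan is to obtain this as the $m=0$ instance of Proposition~\ref{prop:approx-AB-mod}, specialized to $\lambda<0$, where by Lemma~\ref{lem:pole-ordering}(d) the two smallest poles $a_0,a_1$ have equal modulus and must be kept together. According to \eqref{eq:heads-cases}, for $\lambda<0$ the correct index set is $I_m = Z_m^{+}$ with $\mu = |a_{-(m+1)}|$; taking $m=0$ gives $I_0 = Z_0^{+} = \{0,1\}$ and $\mu = |a_{-1}|$, so the order term in \eqref{eq:approx-AB-mod} has denominator $|1 + \frac{2\pi i}{\log\lambda}|^{n}$ and exponent $(2|\log\lambda| + 2\pi)|z| = 2(\pi + |\log\lambda|)|z|$, matching the statement exactly. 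It then remains only to evaluate the two surviving terms of the finite sum $\sum_{k\in\{0,1\}} e^{2\pi i k z}/(1 - \frac{2\pi i k}{\log\lambda})^{n}$.

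First I would record that for $\lambda<0$ the principal branch gives $\log\lambda = \log|\lambda| + \pi i$. The $k=0$ term is then trivially $1$, accounting for the leading $1$. For the $k=1$ term I would simplify the base: $1 - \frac{2\pi i}{\log\lambda} = \frac{\log\lambda - 2\pi i}{\log\lambda} = \frac{\log|\lambda| - \pi i}{\log|\lambda| + \pi i}$, whence $(1 - \frac{2\pi i}{\log\lambda})^{-n} = \left(\frac{\log|\lambda| + \pi i}{\log|\lambda| - \pi i}\right)^{n}$ and the term becomes exactly $\left(\frac{\log|\lambda| + \pi i}{\log|\lambda| - \pi i}\right)^{n} e^{2\pi i z}$. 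This establishes the displayed asymptotic formula; the assertion that for $n\gg 0$ the order constant reduces to the $z=0$ constant is then inherited verbatim from Proposition~\ref{prop:approx-AB-mod}.

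For the remaining assertions I would first note that the base $\frac{\log|\lambda|+\pi i}{\log|\lambda|-\pi i}$ is a quotient of complex conjugates, hence has modulus $1$; its argument is $2\arg(\log|\lambda| + \pi i)$, a fixed real number, so its $n$th powers are periodic or dense on the unit circle according as this argument is a rational or irrational multiple of $2\pi$ (by Kronecker's theorem), exactly as discussed for the inhomogeneous terms in Section~\ref{sec:oscillatory}. To justify the limit I would then check that the remainder genuinely decays: since $\log\lambda + 2\pi i = \log|\lambda| + 3\pi i$ while $\log\lambda = \log|\lambda| + \pi i$, we get $|1 + \frac{2\pi i}{\log\lambda}| = \sqrt{((\log|\lambda|)^2 + 9\pi^2)/((\log|\lambda|)^2 + \pi^2)} > 1$, so the $O$-term tends to $0$ uniformly on each compact $K$; subtracting the modulus-one oscillatory term from both sides leaves $1 + o(1)$ and yields the stated uniform limit.

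I do not expect a genuine obstacle here, since the argument is a specialization followed by algebra. The only points requiring care are the consistent use of the principal branch $\log\lambda = \log|\lambda| + \pi i$ throughout the term-by-term evaluation, and the explicit verification that $|1 + \frac{2\pi i}{\log\lambda}| > 1$, which is precisely what guarantees that the expansion is genuinely asymptotic and hence that the limit is meaningful.
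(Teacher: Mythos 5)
Your proposal is correct and follows essentially the same route the paper intends: the proposition is the $m=0$, $I_0=Z_0^{+}=\{0,1\}$ case of Proposition~\ref{prop:approx-AB-mod} for $\lambda<0$ (per Lemma~\ref{lem:pole-ordering}(d) and \eqref{eq:heads-cases}), with the algebraic simplification $1-\frac{2\pi i}{\log\lambda}=\frac{\log|\lambda|-\pi i}{\log|\lambda|+\pi i}$ using $\log\lambda=\log|\lambda|+\pi i$, and the check $\bigl|1+\frac{2\pi i}{\log\lambda}\bigr|>1$ guaranteeing the remainder decays. The paper states this result without a separate proof, treating it exactly as such a specialization together with the conjugate-pair discussion of Section~\ref{sec:oscillatory}, so nothing is missing from your argument.
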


\begin{example}
\label{ex:l=-1}
It is easy to check that when $\lambda < 0$, periodic behavior occurs in the $0$th order approximation if and only if $\lambda = -e^{\pi \cot \frac{\pi k}{d}}$ for integers
$d \geq 1$ and $k$.
A special case is $\lambda = -1$, where we obtain, after simplifying,
\begin{equation}
\label{eq:l=-1}
\begin{aligned}
	   \frac{(-1)^{n-1}\pi^{2n}}{2(2n)!} \B_{2n}(z;-1) 
	&= \cos\pi z + O\left(\frac{e^{3\pi |z|}}{3^{2n}}\right),
	\\
	   \frac{(-1)^{n-1}\pi^{2n+1}}{2(2n+1)!} \B_{2n+1}(z;-1) 
	&= \sin\pi z + O\left(\frac{e^{3\pi |z|}}{3^{2n+1}}\right).
\end{aligned}\end{equation}
These approximations are essentially those of the Euler polynomials (see Section~\ref{sec:ApostolEuler}).
\end{example}

\begin{remark}
\label{eq:succesive-periods}
An interesting question related to oscillatory behavior is whether we can have repeated instances of ``good'', i.e., periodic, behavior. In other words, can we have different pairs of conjugate poles whose quotients are roots of unity? When $\lambda > 0$, considering the M\"obius transformation $M(z) = \frac{1 + iz}{1 - iz}$, which satisfies
$M(\tan z) = e^{2iz}$, we have, for $k \geq 1$,
$$
	  \frac{a_{k}}{a_{-k}}
	= \frac{1 - i \frac{2\pi k}{\log\lambda}}{1 + i \frac{2\pi k}{\log\lambda}}
	= M \bigl(-\tfrac{2\pi k}{\log\lambda}\bigr)
	= e^{-2\pi i \theta_{k}}
	\iff
	\frac{2\pi k}{\log\lambda} = \tan \pi \theta_{k},
$$
and hence the quotient is a root of unity if and only if $\frac{2\pi k}{\log\lambda} = \tan\pi \theta_{k}$ with $\theta_{k}  \in \Q$ satisfying $0 < |\theta_{k}| < \frac{1}{2}$ (this expression cannot be zero). The situation for $\lambda < 0$ is similar, resulting in
$$
	\frac{(2k + 1)\pi}{\log(-\lambda)} = \tan \pi \theta_{k},
	\quad
	\theta_{k} \in \Q,\
	0 < |\theta_{k}| < \tfrac{1}{2}.
$$
If this happens for two different pairs, corresponding to $k,l \geq 1$, say, then
$$
	\frac{\tan \pi \theta_{k}}{\tan \pi \theta_{l}} = \frac{k}{l}, \frac{2k+1}{2l+1},
	\quad
	\theta_{k},\theta_{l} \in \Q, \
	0 < |\theta_{k}|,|\theta_{l}| < \tfrac{1}{2},
$$
respectively according as $\lambda > 0$ or $\lambda < 0$. 
Without loss of generality one may assume both $\theta_{k},\theta_{l} > 0$.
For example, $\tan \frac{\pi}{3} / \tan \frac{\pi}{6} = 3$ so there is a solution pair
$(k,3k)$ for $\lambda = e^{2 \pi k \sqrt{3}}$.
However, the ratio $k/l = 2$ is impossible, as can be seen by considering cyclotomic polynomials. What happens in general? 

A similar phenomenon was studied by the authors for Bernoulli polynomials in~\cite{NRV-JAT}, namely, when the first conjugate term in the Fourier series fails to provide information because it vanishes, one turns to the next, which in that case cannot vanish simultaneously with the first.
\end{remark}

\section{Successive quotients}

\begin{prop}
\label{prop:quot-AB}
Let $\lambda \notin (0,+\infty)$ and $\beta_{n}(z;\lambda)$ as in~\eqref{def:ab-normalized}. 
Then $\beta_{n}(z;\lambda) \neq 1$ for $n \gg 0$ and 
if we denote by $\epsilon = \pm 1$ the sign of $\Im \lambda$ when $\lambda \notin \R$, and
$\epsilon = 1$ when $\lambda \in (-\infty,0)$, then we have
$$
	  \frac{\beta_{n+1}(z;\lambda) - 1}{\beta_{n}(z;\lambda) - 1}
	= \frac{1}{1 - \epsilon\frac{2\pi i}{\log\lambda}} 
	  + O\left(
	     \left|
	             \frac{1 - \epsilon\tfrac{2\pi i}{\log\lambda}}
	                  {1 + \epsilon\tfrac{2\pi i}{\log\lambda}}
	     \right|^{n}
	     \right),
	     \quad
	     n \to \infty, 
$$
uniformly for $z$ in a compact subset of~$\C$. In particular
$$
	  \lim_{n} \frac{\beta_{n+1}(z;\lambda) - 1}{\beta_{n}(z;\lambda) - 1}
	= \frac{1}{1 - \epsilon\frac{2\pi i}{\log\lambda}}
$$
uniformly on compact subsets of~$\C$.
\end{prop}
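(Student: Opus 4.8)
The plan is to read off from Proposition~\ref{prop:approx-AB-mod} the first two surviving terms of $\beta_n(z;\lambda)$ and to show that, after subtracting the constant term $1$, the quotient is governed entirely by the next term. First I would fix the index of the dominant non-constant pole. Writing $\Lambda = \frac{2\pi i}{\log\lambda}$, the $k=0$ term of the normalized series \eqref{eq:FS-AB-mod} contributes the constant $1$, and by Lemma~\ref{lem:pole-ordering} the pole of next-smallest modulus is $a_{\epsilon}$: for $\Im\lambda>0$ it is $a_{1}$, for $\Im\lambda<0$ it is $a_{-1}$, and for $\lambda<0$ (where $|a_{0}|=|a_{1}|$) it is again $a_{1}$, so that $\epsilon$ is precisely the sign of $\Im\lambda$, with $\epsilon=1$ in the real negative case. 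Setting
$$
	w = 1 - \epsilon\frac{2\pi i}{\log\lambda},
	\qquad
	\rho = \left|1 + \epsilon\frac{2\pi i}{\log\lambda}\right|,
$$
Lemma~\ref{lem:pole-ordering} gives $|a_{\epsilon}| < |a_{-\epsilon}|$ in each of these three cases (equivalently $|w| < \rho$), so that $q \eqdef |w|/\rho < 1$. It is exactly here that the hypothesis $\lambda\notin(0,+\infty)$ enters: for $\lambda>0$ one has $|a_{1}|=|a_{-1}|$, the two terms have equal modulus and oscillate (cf.\ Section~\ref{sec:oscillatory}), and the simple quotient limit breaks down.

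With the choice $I_{0}=\{0,\epsilon\}$ in Proposition~\ref{prop:approx-AB-mod}, I would record the two-term expansion
$$
	\beta_n(z;\lambda) - 1 = T_n + R_n,
	\qquad
	T_n = \frac{e^{2\pi i \epsilon z}}{w^{n}},
	\quad
	R_n = O\!\left(\frac{e^{c_{K}|z|}}{\rho^{n}}\right),
$$
valid uniformly for $z$ in a compact set $K$, the implicit constant depending on $\lambda$ and $K$. The next step is the non-vanishing claim. Since $|e^{2\pi i\epsilon z}| = e^{-2\pi\epsilon\,\Im z}$ is bounded below by a positive constant on $K$, one has $|T_n| \asymp |w|^{-n}$ uniformly on $K$, which dominates $|R_n| = O(\rho^{-n})$ because $q<1$; hence $|\beta_n(z;\lambda)-1| \geq |T_n|\bigl(1 - O(q^{n})\bigr) > 0$ for all $n\gg 0$, uniformly for $z\in K$. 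In particular $T_n\neq 0$ and division by it is legitimate.

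Finally I would compute the quotient. Dividing numerator and denominator of $(\beta_{n+1}-1)/(\beta_n-1)$ by $T_n$ and using $T_{n+1}/T_n = 1/w$ together with $R_n/T_n, R_{n+1}/T_n = O(q^{n})$ gives
$$
	\frac{\beta_{n+1}(z;\lambda)-1}{\beta_n(z;\lambda)-1}
	= \frac{\,1/w + O(q^{n})\,}{1 + O(q^{n})}
	= \frac{1}{w} + O(q^{n}),
$$
the last equality being the elementary estimate $\frac{1/w + \delta}{1+\eta} = \frac1w + O(\max(|\delta|,|\eta|))$ for $\delta,\eta\to 0$. Since $\frac1w = \frac{1}{1 - \epsilon\frac{2\pi i}{\log\lambda}}$ and $q^{n} = \bigl|\frac{1-\epsilon(2\pi i/\log\lambda)}{1+\epsilon(2\pi i/\log\lambda)}\bigr|^{n}$, this is exactly the asserted estimate, and letting $n\to\infty$ yields the stated limit.

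The main obstacle is the bookkeeping of the first paragraph: correctly identifying the dominant non-constant pole $a_{\epsilon}$ and verifying the strict inequality $|w|<\rho$ across the three regimes ($\Im\lambda>0$, $\Im\lambda<0$, $\lambda<0$). The real-negative case is the delicate one, since there $|a_{0}|=|a_{1}|$ forces us to group $a_{0}$ (which produces the subtracted constant $1$) with $a_{1}$, after which $a_{1}$ is the genuine leading term and the next pair $\{a_{-1},a_{2}\}$ is strictly smaller in modulus; once this is settled the remaining manipulations are routine.
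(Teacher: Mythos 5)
Your proposal is correct and follows essentially the same route as the paper: both apply Proposition~\ref{prop:approx-AB-mod} with the two-element index set $\{0,\epsilon\}$, factor out the dominant term $e^{2\pi i\epsilon z}(1-\epsilon\Lambda)^{-n}$, use $|1-\epsilon\Lambda|<|1+\epsilon\Lambda|$ to get non-vanishing for $n\gg 0$, and divide the expansions for $n+1$ and $n$. The only cosmetic difference is that the paper reduces the non-real case to $\Im\lambda>0$ by a without-loss-of-generality step and treats $\lambda<0$ separately, whereas you carry the sign $\epsilon$ through all three regimes uniformly and write out the final division estimate explicitly.
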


\begin{proof}
If $\lambda \notin \R$ we may assume without loss of generality that $\Im \lambda > 0$, so that $\epsilon = 1$. 
Applying Proposition~\ref{prop:approx-AB-mod} with index set $I = Z_{0}^{+} = \{0,1\}$, we have
$$
	\beta_{n}(z;\lambda) = 1 + \frac{e^{2\pi i z}}{(1-\Lambda)^{n}} + O(|1+\Lambda|^{-n})
$$
where $\Lambda = \frac{2\pi i}{\log\lambda}$ and for simplicity we have subsumed the exponential in $|z|$ into the 
order term. Therefore
$$
	  \beta_{n}(z;\lambda) - 1 
	= \frac{1}{(1-\Lambda)^{n}} \left(e^{2\pi i z} + O\left(\left|\frac{1-\Lambda}{1+\Lambda}\right|^{n}\right)\right).
$$
Since $|1-\Lambda| < |1+\Lambda|$ the $e^{2\pi i z}$ term dominates for $n \gg 0$, hence
$\beta_{n}(z;\lambda) - 1 \neq 0$ for $n \gg 0$. Division of the approximations for $n$ and 
$n+1$ then yields the result.

By Lemma~\ref{lem:pole-ordering} and \eqref{eq:heads-cases}, for $\lambda \in (-\infty,0)$ we have the same situation, since $I = Z_{0}^{+}$ is also a suitable index set. In this case, we have $|1-\Lambda| = 1 < |1 + \Lambda|$.
\end{proof}

\begin{remark}
Compare this result with Corollary~\ref{cor:0th-approx}, which shows that $\beta_{n}(z;\lambda) - 1$ tends to $0$
uniformly on compact subsets of $\C$ for $\lambda \notin (-\infty,0)$. As we saw in Proposition~\ref{prop:l<0},
for $\lambda < 0$, the difference $\beta_{n}(z;\lambda) - 1$ oscillates, yet by
Proposition~\ref{prop:quot-AB}, its successive quotients approach a limit.
\end{remark}

When $\lambda \in \R^{+}$, we cannot use $Z_{0}^{+}$ but rather $Z_{1} = \{0,1,-1\}$ as index set. This brings into play
the oscillatory phenomena mentioned in Section~\ref{sec:oscillatory}, which prevent us from obtaining an analogous clean result. However, we can still deduce some bounds from \eqref{eq:cos-pos}, or rather a slight modification after normalizing.

\begin{prop}
\label{prop:quot-AB-R+}
Let $\beta_{n}$ be as in \eqref{def:ab-normalized}.
Let $\lambda \in (0,\infty)$, $\lambda \neq 1$, $\Lambda = 2\pi i/\log\lambda$ and
$\rho = |1 - \Lambda| = |1 + \Lambda|$, $\mu = |1 - 2 \Lambda|$, so that $1 < \rho < \mu$. 
Let $\eta = \rho \mu^{-1}$, so $0 < \eta < 1$.
Then there is a constant $c > 0$ independent of $n$ such that
\begin{equation}
\label{eq:quot-AB-R+}
         \rho^{-1} |\tanh (2\pi \Im z)| - c \eta^{n}
	\leq \left| \frac{\beta_{n+1}(z;\lambda) - 1}{\beta_{n}(z;\lambda) - 1}\right|
	\leq \rho^{-1} |\coth (2\pi \Im z)| + c \eta^{n}.
\end{equation}
The estimate holds uniformly in $n$ for $z$ in compact subsets of $\C \setminus \R$.
\end{prop}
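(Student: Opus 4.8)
The plan is to apply the $m=1$ case of Proposition~\ref{prop:approx-AB-mod}. Since $\lambda>0$, Lemma~\ref{lem:pole-ordering}(c) and \eqref{eq:heads-cases} dictate the index set $I_{1}=Z_{1}=\{-1,0,1\}$, giving
\[
	\beta_{n}(z;\lambda)-1
	= \frac{e^{2\pi i z}}{(1-\Lambda)^{n}} + \frac{e^{-2\pi i z}}{(1+\Lambda)^{n}} + O(\mu^{-n}),
\]
where the exponential growth in $|z|$ has been absorbed into the order constant over the compact set $K$. The first observation is that $\lambda>0$ makes $\log\lambda$ real and hence $\Lambda=2\pi i/\log\lambda$ purely imaginary, so $1+\Lambda=\overline{1-\Lambda}$. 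Writing $1-\Lambda=\rho e^{i\theta}$, so that $1+\Lambda=\rho e^{-i\theta}$, and factoring out $\rho^{-n}$, the difference becomes $\rho^{-n}\bigl(C_{n}+O(\eta^{n})\bigr)$ with $C_{n}=2\cos(2\pi z-n\theta)$ and $\eta=\rho\mu^{-1}\in(0,1)$, the membership following from $\rho=\sqrt{1+(2\pi/\log\lambda)^{2}}<\sqrt{1+(4\pi/\log\lambda)^{2}}=\mu$.

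Next I would record the modulus of the oscillatory factor. For $z=x+iy$ a short computation gives
\[
	|C_{n}|^{2}
	= 4\bigl(\cos^{2}(2\pi x-n\theta)+\sinh^{2}(2\pi y)\bigr)
	= 4\bigl(\cosh^{2}(2\pi y)-\sin^{2}(2\pi x-n\theta)\bigr),
\]
from which the uniform bounds $2|\sinh(2\pi y)|\le|C_{n}|\le 2\cosh(2\pi y)$ are immediate. Hence the ratio $|C_{n+1}|/|C_{n}|$ lies between $|\tanh(2\pi y)|$ and $|\coth(2\pi y)|$, which after multiplication by $\rho^{-1}$ are exactly the envelope values in \eqref{eq:quot-AB-R+}.

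The crucial point, and the reason the proposition is restricted to $\C\setminus\R$, is that on a compact $K\subseteq\C\setminus\R$ the quantity $|\sinh(2\pi\Im z)|$ is bounded away from zero, so $|C_{n}|\ge 2|\sinh(2\pi\Im z)|>0$ uniformly in $n$ and $z$. This lower bound both guarantees $\beta_{n}(z;\lambda)\neq1$ for $n\gg0$ and lets the $O(\eta^{n})$ remainder pass harmlessly through the quotient: writing the moduli of numerator and denominator as $|C_{n+1}|+O(\eta^{n})$ and $|C_{n}|+O(\eta^{n})$ and dividing by a quantity bounded below yields
\[
	\left|\frac{\beta_{n+1}(z;\lambda)-1}{\beta_{n}(z;\lambda)-1}\right|
	= \rho^{-1}\frac{|C_{n+1}|}{|C_{n}|}+O(\eta^{n}),
\]
and combining this with the $\tanh$/$\coth$ envelope from the previous step gives \eqref{eq:quot-AB-R+}. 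I expect this error-control step to be the only genuine obstacle: on the real axis $\sinh(2\pi y)$ vanishes and $C_{n}$ may itself vanish (the truly oscillatory regime of Section~\ref{sec:oscillatory}), so neither the nonvanishing of the denominator nor the clean $O(\eta^{n})$ quotient bound survives, which is precisely what forces the hypothesis $z\notin\R$.
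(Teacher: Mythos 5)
Your proposal is correct and follows essentially the same route as the paper: both reduce to the conjugate-pair form $\beta_n(z;\lambda)-1=2\rho^{-n}\bigl(\cos 2\pi(z-n\alpha)+O(\eta^n)\bigr)$, use the identity $|\cos w|^2=\cos^2(\Re w)+\sinh^2(\Im w)$ to trap the oscillatory factor between $|\sinh 2\pi\Im z|$ and $\cosh 2\pi\Im z$, and exploit that the lower bound is uniformly positive on compact subsets of $\C\setminus\R$ to push the $O(\eta^n)$ error through the quotient. The only cosmetic difference is that you rederive the cosine term from Proposition~\ref{prop:approx-AB-mod} with $I_1=Z_1$ rather than citing \eqref{eq:cos-pos} directly.
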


\begin{proof}
Let $|1-\Lambda| = |1+\Lambda| = \rho$, and $\mu = |1 - 2\Lambda|$. By Lemma~\ref{lem:pole-ordering},
we have $1 < \rho < \mu$. Now write $1-\Lambda = \rho e^{2\pi i \alpha}$ with $\alpha \in [0,1]$. Then as in 
\eqref{eq:cos-pos}, we have
$$
\begin{aligned}
   	\beta_{n}(z;\lambda) - 1 &= 2 \rho^{-n}\cos 2\pi(z - n\alpha) + O(\mu^{-n})
							\\
	                         &= 2 \rho^{-n} \bigl( \cos 2\pi(z-n\alpha) + O(\eta^{n}) \bigr).
\end{aligned}
$$
In general, if $z = x + i y$ with $x,y \in \R$, then $|\cos z|^{2} = \cos^{2} x + \sinh^{2} y$. 
In our case,
$
	|\cos 2\pi(z - n\alpha)|^{2} = \cos^{2} 2\pi(x - n\alpha) + \sinh^{2} 2\pi y.
$
From the remarks made in Section~\ref{sec:oscillatory}, unless $\alpha$ is rational, we have no control over the term 
$\cos 2\pi(x-n\alpha)$, which will be dense in $[-1,1]$. However, if $z \notin \R$ then at least we have a nontrivial lower bound
$
	|\cos 2\pi(z - n\alpha)| \geq |\sinh 2\pi y| > 0,
$
which eventually dominates the $O((\rho\mu^{-1})^{n})$ term.
Combining this with the upper bound $|\cos 2\pi(z - n\alpha)| \leq \cosh 2\pi y$ and forming the successive quotients gives the result.
\end{proof}

The bounds \eqref{eq:quot-AB-R+} are sharp. They correspond to the cases when
$x-n\alpha \equiv 0,\frac{1}{4},\frac{3}{4}$ modulo~$1$, where $\alpha$ is defined
by $\omega = \frac{1-\Lambda}{|1-\Lambda|} = e^{2\pi i \alpha}$. When $\alpha \notin \Q$
the sequence $x - n\alpha$ will come arbitrarily close modulo~$1$ to these 
values infinitely often.

If we have $z = x \in \R$ then the cosine term in the approximation
$\beta_{n}(x;\lambda) - 1 = 2\rho^{-n}( \cos 2\pi(x - n\alpha) + O(\eta^{n}))$ will oscillate densely in $[-1,1]$ unless $\alpha$ is rational. In particular it does not dominate the order term and nothing guarantees that 
$\beta_{n}(x;\lambda) - 1$ is nonzero. Numerical examples where it does vanish are easy to find.

This leaves us with the case $z = x \in \R$ where $\alpha$ is rational. Equivalently, $\omega = \frac{1-\Lambda}{|1-\Lambda|} = e^{2\pi i \alpha}$ is a root of unity. Here one can prove the following result.

\begin{prop}
\label{prop:quot-AB-R+Q}
Let $\beta_{n}$ be as in \eqref{def:ab-normalized}.
Let $\lambda \in (0,\infty)$, $\lambda \neq 1$, $\Lambda = 2\pi i/\log\lambda$,
$\rho = |1 - \Lambda| = |1 + \Lambda|$, and $\mu = |1 - 2\Lambda|$, so that $1 < \rho < \mu$. 
Set $\eta = \rho \mu^{-1}$, so $0 < \eta < 1$. Suppose that $\omega = \rho^{-1} (1-\Lambda) = e^{2\pi i \alpha}$ is a root of unity, i.e.\ $\alpha$ is rational. Then for $z$ in a compact subset $K$ of $\C$ at positive distance $\delta$ from the exceptional set $E(\alpha) \subseteq \R$ where  $\cos(x - n\alpha) = 0$ for some $n \in \N$, there is a constant $c = c_{K,\lambda}> 0$ such that
\begin{equation}
\label{eq:prop:quot-AB-R+Q}
         \rho^{-1}
         \left( \frac{4\delta}{\cosh 2 \pi y} - c \eta^{n} \right)
	\leq \left| \frac{\beta_{n+1}(z;\lambda)-1}{\beta_{n}(z;\lambda)-1}\right|
	\leq \rho^{-1}
	     \left( \frac{\cosh 2\pi y}{4\delta} + c \eta^{n} \right).
\end{equation}
\end{prop}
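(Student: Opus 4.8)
The plan is to re-run the computation in the proof of Proposition~\ref{prop:quot-AB-R+}, replacing the bound $|\cos 2\pi(z-n\alpha)|\ge|\sinh 2\pi y|$ (which is vacuous when $z$ is real) by a lower bound that is uniform in $n$ and survives on the real axis, exploiting the distance hypothesis. Writing $1-\Lambda=\rho e^{2\pi i\alpha}$ as before, the proof of Proposition~\ref{prop:quot-AB-R+} gives
$$
	\beta_{n}(z;\lambda)-1 = 2\rho^{-n}\bigl(\cos 2\pi(z-n\alpha) + O(\eta^{n})\bigr),
$$
so that
$$
	\left|\frac{\beta_{n+1}(z;\lambda)-1}{\beta_{n}(z;\lambda)-1}\right|
	= \rho^{-1}\,
	  \frac{\bigl|\cos 2\pi(z-(n+1)\alpha)+O(\eta^{n+1})\bigr|}
	       {\bigl|\cos 2\pi(z-n\alpha)+O(\eta^{n})\bigr|}.
$$
Putting $z=x+iy$, the identity $|\cos 2\pi(z-n\alpha)|^{2}=\cos^{2}2\pi(x-n\alpha)+\sinh^{2}2\pi y$ gives the upper bound $|\cos 2\pi(z-n\alpha)|\le\cosh 2\pi y$ for free. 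All the content lies in the matching lower bound $|\cos 2\pi(z-n\alpha)|\ge 4\delta$, required simultaneously for every~$n$.

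To establish it, I use that $\alpha=\tfrac{p}{q}\in\Q$, so the real zeros of $w\mapsto\cos 2\pi(w-n\alpha)$, namely $w=n\alpha+\tfrac14+\tfrac{k}{2}$ with $k\in\Z$, all lie in the exceptional set $E(\alpha)$ where the cosine factor vanishes for some~$n$. Hence $\delta\le\dist(z,E(\alpha))\le\sqrt{d_{n}^{2}+y^{2}}$, where $d_{n}=\dist\bigl(x-n\alpha,\tfrac14+\tfrac12\Z\bigr)$ is the distance from $x-n\alpha$ to the nearest real zero, and therefore $d_{n}^{2}+y^{2}\ge\delta^{2}$ for all $n$. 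Two elementary one-variable estimates convert this into a bound on the modulus: Jordan's inequality $\sin u\ge\tfrac{2}{\pi}u$ on $[0,\tfrac{\pi}{2}]$ gives $|\cos 2\pi(x-n\alpha)|=\sin 2\pi d_{n}\ge 4d_{n}$, while $\sinh 2\pi|y|\ge 2\pi|y|\ge 4|y|$ gives $\sinh^{2}2\pi y\ge 16y^{2}$. Substituting,
$$
	|\cos 2\pi(z-n\alpha)|^{2}\ge 16 d_{n}^{2}+16 y^{2}=16\bigl(d_{n}^{2}+y^{2}\bigr)\ge 16\delta^{2},
$$
so $|\cos 2\pi(z-n\alpha)|\ge 4\delta$ for every $n$, which is the source of the constant $4$ in \eqref{eq:prop:quot-AB-R+Q}.

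With both bounds in hand the conclusion is routine. Let $c_{1}$ be the constant in the $O(\eta^{n})$ term. For $n$ large enough that $c_{1}\eta^{n}<2\delta$ the denominator satisfies $|\cos 2\pi(z-n\alpha)+O(\eta^{n})|\ge 4\delta-c_{1}\eta^{n}\ge 2\delta>0$, so $\beta_{n}(z;\lambda)-1\neq 0$ and the quotient is well defined. Bounding the numerator above by $\cosh 2\pi y+c_{1}\eta^{n+1}$ and the denominator below by $4\delta-c_{1}\eta^{n}$ (and symmetrically for the lower estimate), then expanding the fractions as $\tfrac{\cosh 2\pi y}{4\delta}+O(\eta^{n})$ and $\tfrac{4\delta}{\cosh 2\pi y}-O(\eta^{n})$, yields \eqref{eq:prop:quot-AB-R+Q} with a constant $c=c_{K,\lambda}$ absorbing $\rho,\eta,c_{1}$ together with the uniform bounds for $\delta$ and $\cosh 2\pi y$ over $K$. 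The finitely many small values of $n$ are handled by enlarging $c$ so that the lower estimate becomes vacuous there.

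The main obstacle is the uniform estimate $|\cos 2\pi(z-n\alpha)|\ge 4\delta$; everything else is an adaptation of Proposition~\ref{prop:quot-AB-R+}. The delicate feature is that this must hold for \emph{all} $n$ at once with a single $\delta$, which is exactly why the hypothesis measures distance from the union over $n$ of the zero sets, i.e.\ from $E(\alpha)$, rather than from the zeros of a single cosine factor. The rationality of $\alpha$ is what makes $E(\alpha)$ a discrete, periodic, closed subset of $\R$ with $\dist(K,E(\alpha))>0$; for irrational $\alpha$ this set is dense and no positive $\delta$ exists, which is precisely the obstruction already flagged in Section~\ref{sec:oscillatory}.
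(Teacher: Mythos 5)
Your proof is correct and follows essentially the same route as the paper's: the paper's inequality $2\left\|2x-\tfrac12\right\| \le |\cos 2\pi x|$ is exactly your Jordan-inequality bound $\sin 2\pi d_n \ge 4d_n$, and both arguments then rerun the quotient computation of Proposition~\ref{prop:quot-AB-R+} with the lower bound $4\delta$ in place of $|\sinh 2\pi y|$. Your explicit combination $16d_n^2+16y^2\ge 16\delta^2$ is a welcome filling-in of the step the paper dispatches with ``the rest of the proof is the same,'' since for non-real $z$ the bound $4\dist(x,E(\alpha))$ alone would not suffice.
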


\begin{proof}
Suppose $\alpha = \frac{a}{d}$ with $a,d \in \Z, d > 0$ and $\gcd(a,d)=1$. Then $\cos 2\pi(x - n\alpha)$ is periodic in $n$ of period $d$.
It can be checked that $E(\alpha) = \frac{1}{4d} + \frac{1}{2d}\Z$, $\frac{1}{2d} + \frac{1}{d} \Z$ or $\frac{1}{d} \Z$ respectively, according to whether $\gcd(4a,d) = 1,2$ or~$4$. In any case $E(\alpha) \subseteq \frac{1}{4d} \Z$
and in fact these three affine lattices are disjoint with union $\frac{1}{4d}\Z$.

As far as estimation is concerned, we may replace the cosine with the function 
$\|\xi\|$ denoting the distance from $\xi$ to the nearest integer; namely, we have
$$
		     2 \left\|2x -\tfrac{1}{2} \right\|
		\leq |\cos 2\pi x|
		\leq \pi \left\|2x -\tfrac{1}{2} \right\|.
$$
In our case, it is easily seen that
$
	  \min_{n \in \Z} \left\| 2x - \tfrac{1}{2} - 2n \alpha \right\|
	= 2\dist(x,E(\alpha)).
$
The rest of the proof is the same as in Proposition~\ref{prop:quot-AB-R+}, using these estimates to bound
$\cos 2\pi(z - n\alpha)$ below in terms of the distance to the exceptional set, rather than using the bound 
$|\sinh 2\pi y|$ as before.
\end{proof}

\section{The Apostol-Euler Polynomials}
\label{sec:ApostolEuler}

The Apostol-Euler polynomials are a generalization of the Euler polynomials, introduced by Luo and Srivastava (see~\cite{Luo-Tai,LuoSri}). For $\lambda \in \C$, $\lambda \neq 1$, by means of the generating function
\begin{equation}
\label{eq:GF-AE}
	     g_{E}(x,\lambda,z)
  \eqdef \frac{2 e^{zx}}{\lambda e^z+1}
  =      \sum_{n=0}^\infty \E_n(x;\lambda)\, \frac{z^n}{n!},
\end{equation}
which converges for $|z| < |\log(-\lambda)|.$ 
For $\lambda = 1$, we have the classical Euler polynomials, $\E_{n}(x;1) = E_{n}(x)$.
For $\lambda = -1$, $g_{E}$ has a simple pole at $z = 0$ with residue~$-2$. We can include this case also by defining $\frac{1}{n!}\E_{n}(x;\lambda)$ to be, in general, the coefficient of 
$z^{n}$ in the Laurent expansion of $g_{E}$ around $z=0$, or by redefining $g_{E}$ to be the holomorphic part, i.e.\ adding $-\frac{2}{z}$ to it when $\lambda = -1$. The latter actually makes more sense as far as unifying results goes.

Formula (2.18) of~\cite{Luo-MC} or (37) of \cite{LuoSri} gives the relation
\begin{equation}
\label{eq:relAB-AE1}
	\E_{n}(x;\lambda) = \frac{2}{n+1} 
	                    \left(
	                        \B_{n+1}(x;\lambda) 
	                       - 2^{n+1}\B_{n+1}\left(\frac{z}{2};\lambda^{2}\right)
	                    \right)
\end{equation}
between the Apostol-Euler and Apostol-Bernoulli polynomials. However, it is easier to use 
the following relation to transfer results between them.

\begin{lemma}
\label{lem:relAB-AE2}
For all $\lambda \in \C$, we have
\begin{equation}
\label{eq:relAB-AE2}
	\E_{n}(x;\lambda) = - \frac{2}{n+1} \B_{n+1}(x;-\lambda).
\end{equation}
In particular, for $\lambda = 1$,
\begin{equation}
\label{eq:relAB-AE@-1}
	\B_{n}(x;-1) = - \frac{n}{2} E_{n-1}(x), \quad n \geq 1,
\end{equation}
and for $\lambda = -1$,
\begin{equation}
\label{eq:relAB-AE@1}
	\E_{n}(x;-1) = - \frac{2}{n+1} B_{n+1}(x), \quad n \geq 1.
\end{equation}
\end{lemma}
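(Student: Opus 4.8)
The plan is to read the relation off directly from the generating functions \eqref{eq:GF-AB} and \eqref{eq:GF-AE}, since a clean polynomial identity of this shape should come from a clean identity between $g$ and $g_{E}$. The key observation is that replacing $\lambda$ by $-\lambda$ in the Apostol-Bernoulli generating function gives
$$
	g(x,-\lambda,z) = \frac{z e^{xz}}{-\lambda e^{z} - 1}
	               = -\frac{z}{2}\cdot\frac{2 e^{xz}}{\lambda e^{z} + 1}
	               = -\frac{z}{2}\, g_{E}(x,\lambda,z),
$$
equivalently $g_{E}(x,\lambda,z) = -\frac{2}{z}\, g(x,-\lambda,z)$, an identity of meromorphic functions of $z$ valid for every $\lambda \in \C$. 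First I would record this elementary algebraic fact.

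Next I would expand both sides as (Laurent) series about $z = 0$ and compare coefficients. Writing $g(x,-\lambda,z) = \sum_{m \geq 0} \B_{m}(x;-\lambda)\, z^{m}/m!$ and multiplying by $-2/z$ yields
$$
	g_{E}(x,\lambda,z) = -2 \sum_{m \geq 0} \B_{m}(x;-\lambda)\, \frac{z^{m-1}}{m!}.
$$
After the index shift $n = m - 1$, matching the coefficient of $z^{n}/n!$ with $\E_{n}(x;\lambda)$ gives
$$
	\E_{n}(x;\lambda) = -2\,\frac{n!}{(n+1)!}\,\B_{n+1}(x;-\lambda)
	                  = -\frac{2}{n+1}\,\B_{n+1}(x;-\lambda),
$$
which is \eqref{eq:relAB-AE2}.

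The only point requiring care --- and the main (mild) obstacle --- is the term with $m = 0$, i.e.\ the $z^{-1}$ contribution $-2\,\B_{0}(x;-\lambda)\, z^{-1}$. For $\lambda \neq -1$ we have $-\lambda \neq 1$, and then $\B_{0}(x;-\lambda) = 0$ because in \eqref{eq:GF-AB} the numerator $z e^{xz}$ vanishes to first order at $z = 0$ while the denominator is nonzero there; hence this term disappears, both sides are honest power series, and the comparison is immediate. For $\lambda = -1$ we have $-\lambda = 1$ and $\B_{0}(x;1) = B_{0}(x) = 1$, so the $m = 0$ term contributes exactly $-2/z$. This matches precisely the simple pole of $g_{E}(x,-1,z)$ of residue $-2$ noted after \eqref{eq:GF-AE}; with the convention adopted there --- defining $\frac{1}{n!}\E_{n}(x;-1)$ as the coefficient of $z^{n}$ in the Laurent expansion, equivalently using the holomorphic part --- the comparison of the coefficients of $z^{n}$ for $n \geq 0$ goes through verbatim, so \eqref{eq:relAB-AE2} holds in this case as well.

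Finally I would deduce the two displayed special cases by specialization. Putting $\lambda = 1$ in \eqref{eq:relAB-AE2} and using $\E_{n}(x;1) = E_{n}(x)$ gives $E_{n}(x) = -\frac{2}{n+1}\B_{n+1}(x;-1)$; solving for the Apostol-Bernoulli polynomial and replacing $n$ by $n-1$ yields \eqref{eq:relAB-AE@-1}. Putting $\lambda = -1$ and using $\B_{n+1}(x;1) = B_{n+1}(x)$ gives \eqref{eq:relAB-AE@1} at once.
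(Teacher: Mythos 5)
Your proof is correct and follows essentially the same route as the paper: the paper's entire argument is the generating-function identity $g_{B}(x,-\lambda,z) = -\frac{z}{2}\,g_{E}(x,\lambda,z)$ followed by coefficient comparison, which is exactly what you do. Your extra care with the $m=0$ term and the $\lambda=-1$ Laurent-expansion convention is a welcome elaboration of a point the paper leaves implicit, but it does not change the method.
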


\begin{proof}
Writing $g_{B}$ for the generating function \eqref{eq:GF-AB} of the Apostol-Bernoulli polynomials, we easily see that $g_{B}(x,-\lambda,z) = -\frac{z}{2}g_{E}(x,\lambda,z)$, from which \eqref{eq:relAB-AE2} follows.
\end{proof}

\begin{remark}
The relation \eqref{eq:relAB-AE2} shows that the Apostol-Euler and Apostol-Bernoulli families are essentially the same. Apparently, \eqref{eq:relAB-AE2} is a new observation, although we should note that \eqref{eq:relAB-AE1} and \eqref{eq:relAB-AE2} are related by the following ``duplication formula'' for the Apostol-Bernoulli polynomials:
\begin{equation}
\label{eq:AB-mult@2}
	  2^{n} \B_{n}\left(\frac{x}{2}; \lambda^{2}\right)
	= \B_{n}(x;\lambda) + \B_{n}(x;-\lambda).
\end{equation}
Note also that by \eqref{eq:relAB-AE@-1}, Example \ref{ex:l=-1} actually describes the asymptotic behavior of the Euler polynomials.
\end{remark}

The poles of the generating function $g_{E}$ are the numbers $(2k + 1)\pi i - \log\lambda$ for $k \in \Z$. This is also the case for $\lambda = -1$, since $0$ is a pole of $g_{E}(x,-1,z)$.
However, the relations \eqref{eq:relAB-AE2} and \eqref{eq:relAB-AE@1} suggest that we exclude $0$ as a pole. This is consistent with redefining the generating function to be the holomorphic part of $g_{E}$ at~$0$. Thus we set
$$
	S_{E} = \begin{cases}
			\{ (2k + 1) \pi i - \log\lambda : k \in \Z \} & \text{if $\lambda \neq -1$},
			\\
			\{ 2 \pi i k : k \in \Z \} & \text{if $\lambda = -1$}.
			\end{cases}
$$
If we write $S_{B}$ for the pole set of $g_{B}$, and indicate the dependence on $\lambda$, this means we have the symmetric relation $S_{E}(\lambda) = S_{B}(-\lambda)$.

Now, via \eqref{eq:relAB-AE2} we immediately obtain the Fourier expansion of the Apostol-Euler polynomials as a special case of that of the Apostol-Bernoulli polynomials (see also~\cite{Luo-MC}):
\begin{equation}
\label{eq:FS-EB}
    \E_n(x;\lambda) 
  = \frac{2 \cdot n!}{\lambda^x} \sum_{k \in \Z} 
    \frac{e^{(2k+1)\pi i x}}{((2k+1)\pi i - \log \lambda)^{n+1}},
\end{equation}
valid for $0 \le x \le 1$ when $n \ge 1$ and for $0 < x < 1$ when $n = 0$.
This may be rewritten in a form completely analogous to \eqref{eq:FS-poles} as
\begin{equation}
\label{eq:FS-AE-poles}
	\frac{\E_n(x;\lambda)}{2 \cdot n!} = \sum_{a \in S_{E}} \frac{e^{ax}}{a^{n+1}}
\end{equation}
which provides further justification for redefining the generating function as we have indicated.

With \eqref{eq:FS-AE-poles} we now have the same results for the Apostol-Euler polynomials as we did for the Apostol-Bernoulli polynomials; indeed, \eqref{eq:relAB-AE2} says that they are just a special case. We can also think of the poles
$
	\widetilde{a}_k \eqdef (2k + 1) \pi i - \log\lambda
	           =      2\pi i \left( k - \tfrac{1}{2} - \xi \right)
$
with $\xi = \frac{\log\lambda}{2\pi i}$, as ``half-integer versions'' of the poles
$a_{k} = 2\pi i - \log\lambda$ of $g_{B}$. In any case, Lemma~\ref{lem:pole-ordering} still describes their ordering, and Theorem~\ref{thm:approx-AB} remains valid with \eqref{eq:FS-AE-poles} instead of~\eqref{eq:FS-poles}. The normalization of $\E_{n}(z;\lambda)$ analogous to
\eqref{def:ab-normalized} is given for $\lambda \neq 0,-1$ as
\begin{equation}
\label{def:ae-normalized}
\begin{aligned}
	 \varepsilon_{n}(z;\lambda) 
  &= -\beta_{n+1}(z;-\lambda)
   = (-1)^{n+1} \frac{(\log\lambda + \epsilon \pi i)^{n+1}}{2 \cdot n!} 
               \,e^{\epsilon \pi i z} \lambda^{z} \E_{n}(z;\lambda),
  \\
     \epsilon
  &= \begin{cases}
		+1 & \text{if $\lambda > 0$ or $\Im\lambda < 0$,}
		   \\
		-1 & \text{if $\lambda < 0$ or $\Im\lambda > 0$.}
	 \end{cases}               
\end{aligned}
\end{equation}
The sign $\epsilon$ is determined by the relation $\log(-\lambda) = \log(\lambda) + \epsilon \pi i$.  We limit ourselves to listing some of the analogous results formulated in terms of $\E_{n}$ rather than $\B_{n}$, without the specifics about the order of approximation, which are of course still valid. For example, the limit in Corollary~\ref{cor:0th-approx} is now
\begin{equation}
\label{eq:0th-approx-AE}
	   \lim_{n \to \infty} \varepsilon_{n}(z;\lambda)
	 = 1,
	 \quad \lambda \notin (0,\infty),\ \lambda \neq -1,
\end{equation}
while the oscillating case in Proposition~\ref{prop:l<0} is
\begin{equation}
\label{eq:AEl>0}
	     \lim_{n \to \infty} \left( \varepsilon_{n}(z;\lambda)
	   - \left(\frac{\log\lambda + \pi i}{\log\lambda - \pi i}\right)^{n+1} e^{2\pi i z} \right)
	 = 1,
	 \quad \lambda >0,
\end{equation}
and the limit of quotients in Proposition~\ref{prop:quot-AB} becomes
\begin{equation}
\label{eq:quot-AE}
	 \lim_{n \to \infty} \frac{\varepsilon_{n+1}(z;\lambda) - 1}{\varepsilon_{n}(z;\lambda) - 1}
	 = \frac{\log\lambda + \epsilon \pi i}{\log\lambda - \epsilon \pi i},
	 \quad
	 \lambda \notin (-\infty,0),
\end{equation}
all holding uniformly on compact subsets of~$\C$.



\end{document}